\newcommand{\commentaar}[1]{{}}
\newtheorem{theorem}{Theorem}[section]
\newtheorem{lemma}[theorem]{Lemma}
\newtheorem{proposition}[theorem]{Proposition}
\newtheorem{definition}[theorem]{Definition}
\newtheorem{Remark}[theorem]{Remark}
\newenvironment{remark}{\begin{Remark} \begin{rm}}{\end{rm} \end{Remark}}
\newcommand{\basis}[1]{\{ #1 \}}
\newcommand{\breukje}[2]{\mbox{$\tfrac{#1}{#2}$}}
\newcommand{\inprod}[2]{\langle #1 \mid #2 \rangle}
\newcommand{\partieel}[2]{\frac{\partial #1}{\partial #2}}
\newcommand{\poisson}[2]{\{#1,#2\}}
\newcommand{\Diff}{\text{Diff}}
\newcommand{\GGG}{G}
\newcommand{\N}{{\mathbb N}}
\newcommand{\Q}{{\mathbb Q}}
\newcommand{\Ri}{\R[[I]]}
\newcommand{\Ris}{\R[[I]]_0/{\sim}}
\newcommand{\Rhk}{\R[[H,K]]_0}
\newcommand{\Rtdv}{\R[[I_2,I_3,I_4]]_0} 
\newcommand{\Retdv}{\R[[I_1,I_2,I_3,I_4]]_0}
\newcommand{\R}{{\mathbb R}}
\newcommand{\X}[1]{X_{\textstyle \! #1}}
\newcommand{\Z}{{\mathbb Z}}
\newcommand{\becomes}{\mathrel{\mathop:}=}
\newcommand{\cHhk}[1]{\mathcal{H}_{#1}(H,K)}
\newcommand{\cHtdv}[1]{\mathcal{H}_{#1}(I_2,I_3,I_4)}
\newcommand{\cF}{\mathcal{F}}
\newcommand{\cG}{\mathcal{G}}
\newcommand{\cJ}{\mathcal{J}}
\newcommand{\cM}{\mathcal{M}}
\newcommand{\cR}{\mathcal{R}}
\newcommand{\cX}{\mathcal{X}}
\newcommand{\cinf}{C^{\infty}}
\newcommand{\degree}{\text{degree}}
\newcommand{\e}{\text{e}}
\newcommand{\halfje}{\breukje{1}{2}}
\newcommand{\hot}{\text{h.o.t.}}
\newcommand{\hpm}{\hphantom{-}}
\newcommand{\mitdE}{\mbox{\large $ \;\; | \;\; $}}
\newcommand{\sL}{\mathsf{L}}
\newcommand{\spansel}[1]{\langle #1 \rangle}
\newcommand{\vvx}{\tfrac{\partial}{\partial I_2}}
\newcommand{\vvy}{\tfrac{\partial}{\partial I_3}}
\newcommand{\vvz}{\tfrac{\partial}{\partial I_4}}
\newcommand{\version}{7 April 2017}
\newcommand{\shorttitle}{The $1{:}1$ resonance}
\newcommand{\shortauthor}{Han{\ss}mann, Hoveijn}
\shorttitle..., \shortauthor..., \version}
\title{The $1$:$1$ resonance in Hamiltonian systems}
\author
{
{\protect\normalsize Heinz Han{\ss}mann} \\
{\protect\footnotesize\protect\it
Mathematisch Instituut, Universiteit Utrecht, Postbus 80.010,
3508~TA~Utrecht, The Netherlands}
\\[2mm]
{\protect\normalsize Igor Hoveijn} \\
{\protect\footnotesize\protect\it 
Langewoldlaan 2, 9727~DD~Groningen, The Netherlands}
}
\date{\version}
\begin{document}

\maketitle

\tableofcontents

\vspace{10mm}
\noindent

\hfill \textit{The case $k=l=1$} ($1{:}1$ resonance)
       \textit{turns out to be surprisingly complicated.}

\hfill \textit{J.J. Duistermaat in \cite{duistermaat1984}}


\begin{abstract}
\noindent
Two-degree-of-freedom Hamiltonian systems with an elliptic
equilibrium at the origin are characterised by the frequencies
of the linearisation.
Considering the frequencies as parameters, the system undergoes
a bifurcation when the frequencies pass through a resonance.
These bifurcations are well understood for most resonances $k{:}l$,
but not the semisimple cases $1{:}1$ and $1{:}{-}1$.
A two-degree-of-freedom Hamiltonian system can be approximated to
any order by an integrable approximation.
The reason is that the normal form of a Hamiltonian system has an
additional integral due to the normal form symmetry.
The latter is intimately related to the ratio of the frequencies.
Thus we study $S^1$--symmetric systems.
The question we wish to address is about the co-dimension of such a
system in $1{:}1$~resonance with respect to left-right-equivalence,
where the right action is $S^1$--equivariant.
The result is a co-dimension five unfolding of the central
singularity.
Two of the unfolding parameters are moduli and the remaining
non-modal parameters are the ones found in the linear unfolding
of this system.
\end{abstract}

\section{Introduction}\label{sec:intro}
One of the few available methods to study the dynamics of Hamiltonian
systems is to concentrate on the equilibria.
The motion itself being trivial by definition, one considers the local
dynamics and linearises the vector field.
A hyperbolic equilibrium, with no eigenvalues on the imaginary axis,
is dynamically unstable and on a sufficiently small neighbourhood the
motion is completely determined by the linearisation.

In the elliptic case the non-linear terms cannot be disposed of
completely, but lead to normal forms of which one hopes that they
capture the essence of the dynamics.
The reasons for irremovable terms are the resonances between the
eigenvalues on the imaginary axis.
Excluding zero eigenvalues, the resonances of lowest order, i.e.\
the $1{:}1$ and $1{:}{-}1$~resonances, relate double pairs of imaginary
eigenvalues.

\subsection{Resonant equilibria}\label{sec:resequi}
In the present paper we concentrate on the $1{:}1$~resonance and study
an equilibrium around which the Hamiltonian expands as
\begin{equation}
   H(q, p)  \;\; = \;\;
   \frac{1}{2} (p_1^2 + p_2^2) \, + \,
   \frac{1}{2} (q_1^2 + q_2^2) \; + \;
   \ldots
\label{eq:quadpartplus}
\end{equation}
where we omit the irrelevant constant term.
The Hessian $D^2H(0)$ is positive definite and this excludes
nilpotent terms.
Thus a $1{:}1$~resonance is always semisimple.
It occurs persistently in $3$--parameter families,
cf.~\cite{kummer1976, cr1998, duistermaat1984, cotter1986, hlr2003}.
This is in sharp contrast with the $1{:}{-}1$~resonance where the
Hessian is not definite.
Then we have to distinguish a semisimple and a non-semisimple case.
Unfolding the latter leads to the Hamiltonian Hopf bifurcation, which
occurs persistently in $1$--parameter families,
cf.~\cite{meer1985, cb2015, hm2003}.
The semisimple $1{:}{-}1$~resonance also occurs persistently in
$3$--parameter families, cf.~\cite{kummer1978, hlr2003, hhxxxx}.
We expect its unfolding to share features of that of the $1{:}1$~resonance.

A comprehensive study of $k{:}l$~resonances, excluding the
$1{:}{\pm} 1$~cases, has been made in~\cite{duistermaat1984}.
It turns out that all higher order cases are very similar to each
other.
In general the unfolding co-dimension of the unfolding is two, where
one parameter can be considered as a \emph{detuning} of the resonance
and the other is a \emph{modulus}, see~\cite{sanders1978, hanssmann2007}.
Exceptions are the resonances $1{:}2$ and~$1{:}3$ with co-dimensions
$1$ and~$3$, respectively.
Again one of the parameters is a detuning and in the case of
$1{:}3$~resonance, two parameters are moduli.
In all cases there is a bifurcation associated to the resonance.
In general a pair of stable and unstable periodic solutions branches
off from the origin.
The $1{:}2$ and $1{:}3$~cases have a slightly different unfolding
scenario, see~\cite{duistermaat1984, bhlv2003, hanssmann2007, cdhs2007}.
As mentioned before the non-semisimple or nilpotent
$1{:}{-}1$~resonance shows a different bifurcation (the Hamiltonian
Hopf bifurcation, see~\cite{meer1985}) and the bifurcations
triggered by the semisimple $1{:}{\pm} 1$~resonances are still open.

This paper is organized as follows.
In section~\ref{sec:informal} we state an informal version of our
main theorem.
Although informal it still contains the essential properties of the
main theorem.
Before proving our main result we review some facts on Hamiltonian
systems in section~\ref{sec:hamfacts}.
The system we study is in normal form and we discuss the
properties we use in section~\ref{sec:symm}, especially the induced
$S^1$--symmetry.
Finally in section~\ref{sec:uniunfo} we state and in
section~\ref{sec:proof} we prove our main theorem using singularity
theory for $S^1$--equivariant mappings.
The concluding section~\ref{sec:disco} puts our results in context.
Our approach fits in the tradition of
\cite{duistermaat1984, meer1985, cr1998} and it
complements~\cite{cotter1986}.

\subsection{Informal statement of the main theorem}
\label{sec:informal}
In order to state our main result we need a few definitions.
Here our aim is not full generality, the main theorem is formulated
more precisely in section~\ref{sec:mainthe}.

We study a $\cinf$~Hamiltonian system on~$\R^4$ with standard
symplectic form in the neighbourhood of an elliptic equilibrium in
$1{:}1$~resonance.
We may assume that the equilibrium is at the origin, thus the linear
part of the Hamiltonian $H$ at~$0$ vanishes.
The matrix associated to the linearisation of the Hamiltonian vector
field has coinciding pairs of eigenvalues with equal
\emph{symplectic sign}, therefore this matrix has no nilpotent part,
see~\cite{hoveijn1996a}.
As a consequence the quadratic part of the Hamiltonian in the
$1{:}1$~case has Morse index~$0$.
This contrasts with the $1{:}{-}1$~resonance where the corresponding
matrix generically does have a nilpotent part, see~\cite{meer1985}.

As a first step we apply several (symplectic) co-ordinate
transformations.
The first of these takes the quadratic part $H_2$ of~$H$ into the
form presented in equation~\eqref{eq:quadpartplus}.
Moreover, after a finite number of \emph{normal form} transformations
(see for example~\cite{meer1985}), we may assume that a corresponding
part of the Taylor expansion of~$H$ Poisson commutes with~$H_2$.
We now make an approximation by restricting to this finite part and
call it $H$ again.
The flow of~$H_2$ generates an $S^1$ symmetry group and the fact that
$H$ and~$H_2$ Poisson commute implies that $H$ is $S^1$--symmetric.
The consequences of this approximation are discussed in the
remarks following theorem~\ref{the:informal}.

The second step is a reduction with respect to the $S^1$ symmetry.
Restricted to the $3$--sphere $\{ H_2=1 \}$, the projection mapping
involved is a Hopf mapping so the \emph{reduced phase space} is a
$2$--sphere.
Then we apply \emph{equivariant singularity theory} to the
\emph{map germ} $(H,H_2)$ and find a \emph{universal unfolding}
subject to non-degeneracy conditions on the coefficients in the
higher order terms of~$H$.
By the nature of our method, we can not hope for more than local
results and we exploit this fact by switching to germs,
see~\cite{bl1975, martinet1982, montaldi2013}.
Very briefly: a map germ is the collection of mappings equal to one
another on an arbitrary small neighbourhood of a given point,
say~$0$.
Map germs are essentially determined by their Taylor expansions or
even Taylor polynomials in a sense that is made more precise in
section~\ref{sec:mapstab}.
In the sequel we say mapping but tacitly assume map germ.

In order to proceed we need the \emph{generators} of the
$S^1$--invariant functions as co-ordinates.
These are given by
\begin{align*}
   I_1 &= \halfje(q_1^2 + p_1^2 + q_2^2 + p_2^2) \\
   I_2 &= \halfje(q_1^2 + p_1^2 - q_2^2 - p_2^2) \\
   I_3 &= q_1 q_2 + p_1 p_2 \\
   I_4 &= q_1 p_2 - q_2 p_1 ,
\end{align*}
see section~\ref{sec:11symm} for more details.
The generators are not independent but related by the \emph{syzygy}
$I_1^2 = I_2^2 + I_3^2 + I_4^2$.
Nevertheless, $H$ and $H_2$ can now be expressed as functions
of~$I$, that is $H_2(I) = I_1$ and
$H(I) = H_2(I) + H_4(I) + H_6(I) + \cdots + H_k(I)$.
The final result is given in the next theorem.

\begin{theorem}\label{the:informal}
A universal unfolding of the $S^1$--invariant Hamiltonian
\begin{equation*}
   H(I) = I_1 + a_1 I_2^2 + a_2 I_3^2 + a_3 I_4^2
   + b_1 I_2^3 + b_2 I_3^3 + b_3 I_4^3 
\end{equation*}
is given by the five parameter family ($\mu \in \R^5$)
\begin{align*}
   H(I;\mu) = & I_1 + a_1 I_2^2 + a_2 I_3^2 + a_3 I_4^2
   + b_1 I_2^3 + b_2 I_3^3 + b_3 I_4^3\\
   & + \mu_1 I_2 + \mu_2 I_3 + \mu_3 I_4 + \mu_4 I_2^3 + \mu_5 I_3^3
\end{align*}
provided that the real coefficients $a_1$, $a_2$, $a_3$, $b_1$,
$b_2$ and $b_3$ satisfy the non-degeneracy condition
\begin{equation*}
   (a_1 - a_2)(a_2 - a_3)(a_3 - a_1) \, b_1 b_2 b_3 \neq 0.
\end{equation*}
\end{theorem}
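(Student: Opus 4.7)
The plan is to apply $S^1$-equivariant left–right singularity theory to the map germ $F = (H_2, H) \colon (\R^4, 0) \to (\R^2, 0)$, compute the tangent space $T\mathcal{O}_F$ to its orbit under the equivariant left–right group, and verify that the five directions $I_2$, $I_3$, $I_4$, $I_2^3$, $I_3^3$ (perturbing the $H$-component only) project to a basis of the complementary space. Versality of the proposed family then follows from the Mather–Martinet unfolding theorem in its equivariant form, while minimality — hence universality — will come from the linear independence of these five classes modulo $T\mathcal{O}_F$.

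We work in the ring of $S^1$-invariants $R = \R[[I_1, I_2, I_3, I_4]] / (I_1^2 - I_2^2 - I_3^2 - I_4^2)$, in which $F$ sends the invariants to $(I_1, H)$. The left action contributes diffeomorphism germs of the target; using the intrinsic status of $H_2 = I_1$, this can be arranged to preserve the first component and to act on the second by smooth relabellings $\alpha(I_1) + \beta(I_1)\,H$. The right action contributes $S^1$-equivariant vector fields on $(\R^4, 0)$, which in invariants become four-tuples $(\xi_1, \ldots, \xi_4) \in R^4$ constrained by the syzygy-tangency relation $I_1 \xi_1 = I_2 \xi_2 + I_3 \xi_3 + I_4 \xi_4$. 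Infinitesimally, the $H$-component of $T\mathcal{O}_F$ equals
\[
  \alpha(I_1) \;+\; \beta(I_1)\,H_0 \;+\; \sum_{j=1}^{4} \xi_j\,\partial_{I_j} H_0,
\]
with $\partial_{I_1} H_0 = 1$ and $\partial_{I_j} H_0 = 2 a_j I_j + 3 b_j I_j^2$ for $j = 2, 3, 4$, where $H_0$ denotes the central singularity.

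The second and main computational step is to reduce $R$ modulo $T\mathcal{O}_F$ via Malgrange's preparation theorem over the subring $\R[[I_1]] \subset R$. The hypothesis $(a_1 - a_2)(a_2 - a_3)(a_3 - a_1) \neq 0$ renders the three leading linear pieces $\{2 a_j I_j\}_{j=2,3,4}$ linearly independent, and together with the syzygy this provides enough room to eliminate the quadratic monomials $I_2^2, I_3^2, I_4^2$ modulo higher order and multiples of $I_1$. The hypothesis $b_1 b_2 b_3 \neq 0$ then allows one, via the relations $I_j(2 a_j + 3 b_j I_j) \equiv 0 \pmod{T\mathcal{O}_F}$, to iteratively eliminate every $I_j^n$ with $n \geq 4$ in favour of $I_j^3$; the syzygy finally produces one identification among $I_2^3, I_3^3, I_4^3$ modulo lower-degree terms absorbed by $I_1$. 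What survives is precisely the five-dimensional residue space spanned by $I_2, I_3, I_4, I_2^3, I_3^3$, matching the five directions of the proposed unfolding.

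\textbf{Main obstacle.} The technical heart of the argument is this finite-dimensional reduction, in which the tangency constraint on $(\xi_1, \ldots, \xi_4)$ couples the degrees of the $\xi_j$'s nontrivially and one must keep careful account of how the Jacobian ideal of $H_0$ interacts with the cubic relation that the syzygy induces among $I_2^3, I_3^3, I_4^3$. The non-degeneracy hypotheses enter exactly here, controlling both the rank of the Jacobian and the dimension of the final residue space. Once the reduction is complete, the interpretation is straightforward: $\mu_1, \mu_2, \mu_3$ are non-modal detunings that split the $1{:}1$ resonance, while $\mu_4, \mu_5$ capture the two moduli hidden in the cubic coefficients $b_1, b_2, b_3$ (the third one being pinned down by the syzygy).
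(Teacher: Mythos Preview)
Your overall shape is right --- compute the tangent space to the left--right orbit and exhibit a five-dimensional complement --- but several concrete steps are wrong and one key case is missing.  First, the left action is too small: once you fix the first component $I_1$, the infinitesimal contribution to the $H$-slot is an \emph{arbitrary} function of $(I_1,H)$, not just $\alpha(I_1)+\beta(I_1)H$.  The higher powers $H^2,H^3,\ldots$ are indispensable; the paper's high-degree argument rests on Vandermonde matrices built from $K^m,K^{m-1}H,\ldots,H^m$ to span $\spansel{I_2^k,I_3^k,I_4^k}$.  Second, the relations $I_j(2a_j+3b_jI_j)\equiv 0\pmod{T\mathcal{O}_F}$ are false individually: under your own tangency constraint $I_1\xi_1=\sum I_j\xi_j$ you cannot take a single $\xi_j=1$.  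The admissible right fields are the rotations $I_j\partial_{I_k}-I_k\partial_{I_j}$, the Euler field, and one more; the single cubic relation among $I_2^3,I_3^3,I_4^3$ comes from the Euler field (giving $V_1(H)-H=b_1I_2^3+b_2I_3^3+b_3I_4^3$), not from the quadratic syzygy.  Third, you do not address degree five, which is the delicate case: only $KG_1$ and $HG_1$ hit $\spansel{I_2^5,I_3^5,I_4^5}$ directly, and the paper has to construct a special element $F_5\in T_1$ with vanishing degree-four part to close the remaining one-dimensional gap --- this is exactly where the full condition $(a_1-a_2)(a_2-a_3)(a_3-a_1)b_1b_2b_3\neq 0$ enters.

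The paper avoids the syzygy altogether by passing from $(H,H_2)$ to $(H,K)$ with $K=I_1^2=I_2^2+I_3^2+I_4^2$: then $I_1$ becomes a parameter, $(I_2,I_3,I_4)$ are free coordinates on~$\R^3$, the right action is generated by all nine fields $I_i\partial_{I_j}$, and the module part of the tangent space is over~$\Rhk$.  This makes the degree-by-degree analysis clean (Propositions~\ref{pro:degk} and~\ref{pro:degonetofive}).  Your quotient-ring framework could in principle be made to work, but you would first have to correctly identify the module of equivariant vector fields tangent to the syzygy variety and enlarge the left contribution; as written, the reduction you sketch does not go through.
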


This theorem holds for $S^1$--symmetric Hamiltonian systems in
$1{:}1$~resonance.
Let us make a few remarks on its scope.

\begin{remark}\label{rem:scope}\strut
\begin{enumerate}[topsep=0ex,partopsep=0px,itemsep=0px]
\item
The unfolding terms $\mu_4 I_2^3$ and $\mu_5 I_3^3$ can be replaced
by any pair from $I_2$, $I_3$ and $I_4$.
\item
The reduction of the $3$--sphere defined by
$\halfje(q_1^2 + p_1^2 + q_2^2 + p_2^2) = h_2$ to the $2$--sphere
$I_2^2 + I_3^2 + I_4^2 = h_2^2$ is regular if $h_2 \neq 0$, so every
point on the reduced phase space corresponds to an $S^1$--orbit on
the original phase space~$\R^4$.
\item
On the reduced phase space the solution curves are defined by
$(H,H_2) = (h,h_2)$.
Thereby time parametrisation is lost.
Solution curves consisting of a single point on the reduced phase
space correspond to periodic orbits on~$\R^4$, whereas closed curves
on the reduced phase space correspond to $2$--tori on~$\R^4$.
The former are generically isolated on~$S^2$, but the latter come in
$1$--parameter families.
\item\label{itm:flat}
Non--$S^1$--symmetric perturbations (i.e.\ including
non--$S^1$--invariant terms in the Taylor expansion of~$H$) do
affect our result.
However, normal form transformations enable us to make these
perturbations as small as we wish.
Nevertheless their effect is that families of $2$--tori,
on~$\R^4$, do not survive as such.
From {\sc kam}~theory one expects that these families are
\emph{Cantorised}, i.e.\ the $2$--tori persist as a Cantor
subfamily of large $2$--dimensional Hausdorff measure, where the
dense set of internal resonances leads to gaps in the
parametrisation.
Periodic orbits, as long as they are elliptic or hyperbolic, do
persist, as do their bifurcations.
Thus our result gives information on low periodic orbits of general
Hamiltonian systems in $1{:}1$~resonance.
Homoclinic and heteroclinic connections on the reduced phase space
generically do break up under non--$S^1$--symmetric perturbations
yielding chaotic regions familiar from Poincar\'e sections of for
example the H\'enon--Heiles system.
\item
In view of the previous remark, the bifurcation diagram for the
equilibrium at $0$ on~$\R^4$ with branches of periodic orbits is
valid for general Hamiltonian systems in $1{:}1$~resonance.
\end{enumerate}
\end{remark}

\section{A few facts about Hamiltonian systems}\label{sec:hamfacts}
Here we very briefly review some facts from the theory of Hamiltonian
systems.
We concentrate on~$\R^4$.
However everywhere in the following sections $\R^4$ can be replaced
by~$M$, a $\cinf$ real symplectic manifold.
For a thorough treatment we refer to for example~\cite{am1987, arnold1980}.

\subsection{Symplectic spaces and Hamiltonian systems}
Let $\omega$ be a closed, non-degenerate skew symmetric $2$--form
on~$\R^4$, making $(\R^4, \omega)$ a \emph{symplectic space}.
Furthermore let $H$ be a function in $\cinf(\R^4, \R)$, then
the triple $(\R^4, \omega, H)$ is called a smooth real
\emph{Hamiltonian system}.
Now let $\cX(\R^4)$ be the set of smooth vector fields on~$\R^4$.
The vector field $\X{H} \in \cX(\R^4)$ satisfying
\begin{equation*}
   \omega(\X{H},Y) = dH(Y)
\end{equation*}
for all $Y \in \cX(\R^4)$, is called the \emph{Hamiltonian vector field}
of~$H$.
The vector field $\X{H}$ defines the \emph{flow} of the Hamiltonian
system on~$\R^4$, we also call this the flow of~$H$.
A function~$f$ is preserved under the flow of the vector field~$\X{H}$
if and only if the Lie derivative of~$f$ is identically zero.
Using $L_{X_H}(f) = df(\X{H})$ we find that the Hamiltonian
function~$H$ is preserved by the flow of $\X{H}$ because
\begin{equation*}
   L_{X_H}(H) = dH(\X{H}) = \omega(\X{H},\X{H}) = 0.
\end{equation*}
The last equality follows from the skew symmetry of~$\omega$.

\subsection{Poisson brackets}

Let $f$ and $g$ be in $C^{\infty}(\R^4,\R)$, then we define the Poisson
bracket of $f$ and~$g$ as
\begin{equation*}
   \poisson{f}{g} = \omega(\X{f},\X{g}).
\end{equation*}
It follows from this definition that
\begin{equation*}
   \poisson{f}{g} = L_{X_g}(f) = -L_{X_f}(g).
\end{equation*}
Suppose that the function~$f$ is preserved under the flow of~$\X{H}$,
then
\begin{equation*}
   0 = L_{X_H}(f) = \poisson{f}{H}
\end{equation*}
and vice verse, so once we have the Poisson bracket we do not need
the vector field~$\X{H}$ to determine whether $f$ is preserved
under the flow of~$H$.
Furthermore $\poisson{f}{g} = - \poisson{g}{f}$ so
$\poisson{H}{H} = 0$ from which again follows that $H$ is preserved
under the flow of~$\X{H}$.
The Poisson bracket satisfies Jacobi's identity whence Hamiltonian
vector fields form a Lie algebra; in fact we have
\begin{equation*}
   [\X{f}, \X{g}] = - \X{\poisson{f}{g}}.
\end{equation*}
Thus $(\cinf(\R^4),\poisson{\cdot}{\cdot})$ is a Lie algebra of functions.

\subsection{Standard forms}

Darboux's theorem now states that there are co-ordinates such that
$\omega$ becomes constant.
Then by applying linear algebra we can bring $\omega$ into a standard
form such that
\begin{equation*}
   \omega(\xi,\eta) = \inprod{\xi}{\Omega \eta}
\end{equation*}
for all $\xi, \eta \in \R^4$.
Here $\inprod{\cdot}{\cdot}$ is the standard inner product on~$\R^4$
and $\Omega$ is a linear mapping with
$\Omega = -\Omega^t = -\Omega^{-1}$ which takes the standard form
\begin{equation*}
   \Omega = \left(\begin{array}{rr}0&I\\-I&0\end{array}\right)
\end{equation*}
on the standard basis $\basis{e_1, e_2, f_1, f_2}$.
Let us take co-ordinates $z = (q_1, q_2, p_1, p_2)$ with respect to
this basis, then the Poisson bracket becomes
\begin{equation*}
   \poisson{f}{g} = \sum^2_{i=1} \left(
   \partieel{f}{q_i}\partieel{g}{p_i} -
   \partieel{f}{p_i}\partieel{g}{q_i}
   \right).
\end{equation*}
Using the Poisson bracket on these co-ordinates we obtain the canonical
equations of motion
\begin{equation*}
   \dot{q}_i = \poisson{q_i}{H} = \partieel{H}{p_i}, \;\;
   \dot{p}_i = \poisson{p_i}{H} = -\partieel{H}{q_i}
\end{equation*}
for the Hamiltonian~$H$.
The Poisson bracket allows us to use functions instead of vector
fields, which simplifies many computations.

\section{Resonant Hamiltonian systems and $S^1$--symmetry}
\label{sec:symm}
On the symplectic space $(\R^4, \omega)$ we consider
$\cinf$~Hamiltonian systems with an equilibrium at the origin.
Furthermore suppose that the linearisation of the corresponding
Hamiltonian vector field has resonant imaginary eigenvalues.

When this system has been transformed into normal form it admits an
$S^1$--symmetry group.
Resonant eigenvalues are not generic, but when they appear in
parameter families of Hamiltonian systems they are a source of
bifurcations.
Therefore it is useful to study unfoldings of resonant systems.
Most resonances in $4$--dimensional Hamiltonian systems have been
studied before, see~\cite{duistermaat1984} and references therein.
This approach has to be refined for the $1{:}1$ and
$1{:}{-}1$~resonances, where the sign is the \emph{symplectic sign}.
See~\cite{meer1985} for an extensive study of the so-called nilpotent
$1{:}{-}1$~resonance which in a parameter family gives rise to the
\emph{Hamiltonian Hopf bifurcation}.
Our aim here is to study the $1{:}1$ resonance.
While this case has already been considered in~\cite{cotter1986}, the
arguments presented there are incomplete. 

A resonant Hamiltonian system naturally leads to an $S^1$--invariant
system when passing to a normal form truncation.
But we may also consider Hamiltonian systems with an externally given
symplectic $S^1$--action.
Our results hold for such systems as well, provided that the
$S^1$--action satisfies the conditions in the next section.

\subsection{$S^1$--symmetry related to the $1{:}1$ resonance}
\label{sec:11symm}
Since we work in the class $\cinf(\R^4)$ the Hamiltonian function~$H$
has an infinite Taylor series.
We now put some more structure on these functions by collecting
homogeneous terms, turning $(\cinf(\R^4),\poisson{\cdot}{\cdot})$
into a graded Lie algebra.
Then we expand
\begin{equation*}
   H = H_2 + H_3 + \cdots + H_k + \cdots
\end{equation*}
with $H_k \in \R[z]$ homogeneous of degree~$k$.
The normal form procedure acts in a very nice way on this Lie algebra,
for details see~\cite{meer1985}.
The final result is that for the normal form we have
$\poisson{H_2}{H_k}=0$ for all $k$ and therefore $\poisson{H_2}{H}=0$.
This means that the normal form of~$H$ is invariant under the flow
of~$H_2$ which is generated by~$\X{H_2}$.
Now we assume that the linear part $\X{H_2}$ of the vector
field~$\X{H}$ is in $1{:}1$ resonance, then (the normal form of) $H$
is $S^1$--invariant with respect to the $S^1$--action
\begin{equation}
\label{eq:s1actie}
\begin{array}{cccc}
   \phi \; : & S^1 \times \R^4 & \longrightarrow & \R^4  \\
   & (\varphi, z) & \mapsto & R_{\varphi} z
\end{array}
\end{equation}
where
\begin{equation*}
   R_{\varphi} = \left(
   \begin{array}{cccc}
      \cos \varphi & -\sin \varphi & 0 &  0\\
      \sin \varphi & \hpm \cos \varphi & 0 &  0\\
      0 &  0 & \cos \varphi & -\sin \varphi\\
      0 &  0 & \sin \varphi & \hpm \cos \varphi\\
   \end{array}\right)
\end{equation*}
and $z=(q_1,p_1,q_2,p_2)$.
The quadratic part of such a Hamiltonian systems reads
\begin{equation*}
   H_2(q_1,p_1,q_2,p_2) =
   \halfje(q_1^2 + p_1^2) + \halfje(q_2^2 + p_2^2).
\end{equation*}
Note that this function has Morse index~$0$ which is intimately
related to the fact that the eigenvalues of the linear part of the
corresponding Hamiltonian vector field have equal symplectic sign,
see~\cite{bc1977}.

Every $S^1$--invariant $\cinf$--function can be written as a function
of so called \emph{invariants}.
This is a consequence of far more general results which we now state.
We start with a theorem on \emph{invariant polynomials}.

\begin{theorem}[Hilbert, Schwartz]\label{the:schwartz}
Let $\Gamma$ be a compact group which acts linearly on~$\R^n$ and let
$\R[z]^{\Gamma}$ denote the set of $\Gamma$--invariant polynomials.
Then a finite number $r$ of polynomials
$\rho_1, \ldots, \rho_r \in \R[z]^{\Gamma}$ exist that
generate~$\R[z]^{\Gamma}$.
The $\rho_1, \ldots, \rho_r$ form a \emph{Hilbert basis} and are
called \emph{generators}.
Furthermore every $\Gamma$--invariant $\cinf$--function
$f \in \cinf(\R^n)^{\Gamma}$ can be written as a $\cinf$--function
$\hat{f} \in \cinf(\R^r)$ of the $r$~generators of~$\R[z]^{\Gamma}$.
\end{theorem}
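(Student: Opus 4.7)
The plan is to prove the theorem in two stages corresponding to its two conclusions: the algebraic statement on finite generation (due to Hilbert) and the smooth extension statement (due to Schwarz).

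For the first part I would begin by introducing the Reynolds averaging operator
\[
R(f)(z) \; = \; \int_{\Gamma} f(\gamma \cdot z) \, d\gamma
\]
built from the normalised Haar measure on the compact group~$\Gamma$; it is well defined because $\Gamma$ is compact. This operator is a linear projection onto $\R[z]^{\Gamma}$ that preserves the grading by total degree. Next consider the ideal $\mathfrak{I} \subset \R[z]$ generated by all homogeneous invariant polynomials of strictly positive degree. Since $\R[z]$ is Noetherian (Hilbert's basis theorem), $\mathfrak{I}$ admits a finite set of generators, which one may take homogeneous and, after applying $R$, already $\Gamma$-invariant; call these $\rho_1, \ldots, \rho_r \in \R[z]^{\Gamma}$. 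A straightforward induction on the total degree then shows that they generate $\R[z]^{\Gamma}$ as an $\R$-algebra: given invariant homogeneous $f$ of positive degree $k$, write $f = \sum_i h_i \rho_i$ in $\R[z]$; apply $R$ and use that each $\rho_i$ is already invariant to obtain $f = R(f) = \sum_i R(h_i)\, \rho_i$, where every $R(h_i)$ is invariant and homogeneous of degree $k - \deg(\rho_i) < k$, closing the induction.

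For the second part the goal is to show that the pullback map
\[
\rho^{*} : \cinf(\R^{r}) \; \longrightarrow \; \cinf(\R^{n})^{\Gamma}, \qquad (\rho^{*}\hat f)(z) \; = \; \hat f(\rho_1(z), \ldots, \rho_r(z))
\]
is surjective. The naive Borel-series argument combined with Part~1 produces only a formal power-series pre-image and is insufficient. The analytic input needed is Malgrange's preparation/division theorem, together with the observation that $\rho(\R^{n}) \subset \R^{r}$ is a closed semi-algebraic set and that the invariants separate $\Gamma$-orbits, so $\rho$ descends to an injection of the orbit space $\R^{n}/\Gamma$ into $\R^{r}$. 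One then lifts an arbitrary $\Gamma$-invariant smooth function $f$ to a smooth function on $\R^{r}$ by combining two ingredients: (i)~at each point of $\rho(\R^{n})$ a formal lift exists by the Hilbert part applied to each Taylor coefficient, and (ii)~a flatness property of $\rho^{*}$ along the strata of orbit types turns this formal data into a genuine $\cinf$ lift, which is the content of Schwarz's theorem proper.

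The main obstacle is the second part. The Hilbert portion is a clean piece of invariant theory that follows almost immediately from the Noetherian property and the Reynolds operator. Schwarz's theorem, on the other hand, is a deep analytic result that cannot be obtained by averaging and polynomial approximation alone, precisely because $\rho$ fails to be a submersion at points with non-trivial isotropy and so $\rho^{*}$ need not be locally surjective on arbitrary smooth functions near singular orbits. In practice I would invoke Schwarz's theorem as a black box, rather than reproduce its technical proof via Malgrange preparation, since only the statement is needed for the equivariant singularity-theoretic arguments that follow later in the paper.
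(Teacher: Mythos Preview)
The paper does not give a proof of this theorem at all: it is stated as a classical result attributed to Hilbert and Schwarz and then used without further justification. Your final sentence --- that you would invoke Schwarz's theorem as a black box since only the statement is needed --- is precisely what the paper does.

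Your sketch of the proof is nonetheless a faithful outline of the standard argument. The Hilbert portion via the Reynolds operator and Noetherianity is correct and complete as written. For the Schwarz portion you correctly identify that the formal Taylor-series lift is insufficient and that the genuine analytic content lies in Malgrange-type preparation; this is indeed where the difficulty resides, and your decision to quote the result rather than reproduce Schwarz's original proof is entirely in line with how the paper (and most of the applied literature) treats it. There is no gap in your proposal relative to what the paper actually does.
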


Unfortunately the function $\hat{f}$ need not be unique for there may
be syzygies among the~$\rho_j$.

Let us now determine the invariants of the $S^1$--action associated
to the $1{:}1$~resonance.
These are polynomials on the phase space and they Poisson commute
with~$H_2$.

\begin{lemma}\label{lem:rgens}
The generators in $\R[q, p]^{S^1}$ of the invariants of the
$S^1$--action associated to the {\rm $1{:}1$}~resonance are given by
\begin{align*}
   I_1 &= \halfje(q_1^2 + p_1^2 + q_2^2 + p_2^2) \\
   I_2 &= \halfje(q_1^2 + p_1^2 - q_2^2 - p_2^2) \\
   I_3 &= q_1 q_2 + p_1 p_2 \\
   I_4 &= q_1 p_2 - q_2 p_1
\end{align*}
with \emph{syzygy} $I_1^2 = I_2^2 + I_3^2 + I_4^2$.
\end{lemma}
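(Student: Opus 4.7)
The plan is to complexify, reducing the claim to the classical invariant theory for the diagonal $U(1)$--action on~$\mathbb{C}^2$. Setting $z_j \becomes q_j + \ii p_j$ for $j = 1,2$, a direct check shows that the rotation $R_\varphi$ becomes $(z_1, z_2) \mapsto (\e^{\ii \varphi} z_1,\, \e^{\ii \varphi} z_2)$, so an invariant complex polynomial is a $\mathbb{C}$--linear combination of monomials $z_1^a \bar z_1^b z_2^c \bar z_2^d$ with $a + c = b + d$.

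The first step is to exhibit four quadratic complex invariants $A \becomes z_1 \bar z_1$, $B \becomes z_2 \bar z_2$, $C \becomes z_1 \bar z_2$ and $\bar C = \bar z_1 z_2$, and to show by induction on total degree that every invariant monomial lies in $\mathbb{C}[A, B, C, \bar C]$. For the induction step, given $z_1^a \bar z_1^b z_2^c \bar z_2^d$ with $a + c = b + d > 0$, at least one of $a,c$ is positive, say $a \geq 1$; then $b + d \geq 1$, so either $A$ divides the monomial (if $b \geq 1$) or $C$ does (if $d \geq 1$), and the resulting factorisation lowers the total degree. The case $a = 0$ is analogous, with $B$ or $\bar C$ as divisor.

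The second step is to pass to real coordinates using $A = I_1 + I_2$, $B = I_1 - I_2$ and $C = I_3 - \ii I_4$. Since every real $S^1$--invariant polynomial is, as a complex polynomial, fixed by complex conjugation, it can be rewritten as a real polynomial in $I_1, I_2, I_3, I_4$; the invariance of each $I_j$ is immediate, either from this decomposition or by direct computation with $R_\varphi$. Finally, the syzygy arises from the single obvious relation $AB = C \bar C$ among the complex generators: substituting produces $(I_1 + I_2)(I_1 - I_2) = I_3^2 + I_4^2$, i.e.\ $I_1^2 = I_2^2 + I_3^2 + I_4^2$. The main obstacle is the inductive monomial factorisation; once it is established, the remainder reduces to bookkeeping between the complex and real presentations.
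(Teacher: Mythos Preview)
Your argument is correct. The paper does not actually prove this lemma --- immediately after the statement it writes ``For a proof we refer to~\cite{cb2015}'' --- so there is no in-paper argument to compare against. Your route via complexification (identifying the $S^1$--action with the diagonal $U(1)$--action on $\mathbb{C}^2$, reading off invariant monomials by weight, and factoring them inductively over the four quadratic generators $z_1\bar z_1$, $z_2\bar z_2$, $z_1\bar z_2$, $\bar z_1 z_2$) is the standard approach and essentially what one finds in the cited reference. The passage from a complex polynomial in $A,B,C,\bar C$ to a \emph{real} polynomial in $I_1,\ldots,I_4$ is slightly brisk, but since the change of generators is $\mathbb{C}$--linear and invertible and the $I_j$ are real-valued, averaging with the complex conjugate yields real coefficients. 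One minor omission: you verify the relation $I_1^2 = I_2^2 + I_3^2 + I_4^2$ but do not argue that it generates the full ideal of relations among the $I_j$; however, the lemma as phrased does not strictly demand this, and for the subsequent use in lemma~\ref{lem:modrel} the stated relation is all that is needed.
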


For a proof we refer to~\cite{cb2015}.

Thus every $S^1$--invariant $\cinf$--function on~$\R^4$ can be written
as a $\cinf$--function of the Hilbert basis $\{ I_1, I_2, I_3, I_4
\}$.  From now on we restrict to a smaller set of functions, namely
the formal series in~$\Ri$. The reasons we can do this are 1) every
polynomial in~$I$ is the Taylor series of a $\cinf$-function of~$I$;
2) we only allow for a finite number of conditions on the coefficients
of a series.  The latter means that we do not encounter the subtleties
on infinitely flat functions, however see remark~\ref{rem:scope},
item~\ref{itm:flat}. Moreover we are only interested in
$\cinf$--functions that are zero at the origin. Therefore we only
consider formal series without constant terms, denoted by $\Ri_0$.

Now a function in~$\Ri_0$ is not unique, due to the syzygy among
the generators.
In this respect it is worth noting that when we consider functions
in~$\Ri_0$ modulo the ideal generated by
$I_1^2 - (I_2^2 + I_3^2 + I_4^2)$, denoted by~$\Ris$, we have the
following splitting, see~\cite{cb2015}.
This splitting is also not unique, but seems natural in view of the
syzygy.

\begin{lemma}\label{lem:modrel}
\quad
${\displaystyle \Retdv/{\sim} \;\; = \;\; \Rtdv \; \oplus \; I_1 \Rtdv}$.
\end{lemma}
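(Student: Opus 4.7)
The statement is essentially a formal Weierstrass-preparation result for the distinguished polynomial $p(I_1) := I_1^2 - (I_2^2+I_3^2+I_4^2)$, which is monic of degree $2$ in $I_1$ over the coefficient ring $\R[[I_2,I_3,I_4]]$. My plan is to view $\R[[I_1,I_2,I_3,I_4]]$ as $\R[[I_2,I_3,I_4]][[I_1]]$ and perform a formal division by $p$, then carefully track where the constant-term condition lands.

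For existence, I would write an arbitrary $f\in\Retdv$ as $f=\sum_{k\ge 0} c_k\,I_1^k$ with $c_k\in\R[[I_2,I_3,I_4]]$, and use the relation $I_1^2\equiv r:=I_2^2+I_3^2+I_4^2$ to define
\begin{equation*}
  f_0 \;:=\; \sum_{k\ge 0} c_{2k}\,r^k, \qquad f_1 \;:=\; \sum_{k\ge 0} c_{2k+1}\,r^k.
\end{equation*}
Because $r$ has order $\ge 2$ in $(I_2,I_3,I_4)$, the powers $r^k$ tend to $0$ in the formal topology, so both series are well-defined elements of $\R[[I_2,I_3,I_4]]$ and satisfy $f\equiv f_0+I_1 f_1 \pmod{p}$. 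The condition $f\in\Retdv$ (no constant term) then forces $c_0$ to have no constant term, hence $f_0\in\Rtdv$, while $I_1 f_1$ trivially has no constant term, which is how $I_1 f_1$ lies in the summand $I_1\Rtdv$ on the right-hand side.

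For uniqueness I would suppose $f_0+I_1f_1\equiv 0\pmod{p}$, i.e.\ $f_0+I_1f_1=p\cdot g$ for some $g=\sum g_kI_1^k\in\R[[I_1,I_2,I_3,I_4]]$. Expanding
\begin{equation*}
  p\cdot g \;=\; \sum_{k\ge 0} g_k I_1^{k+2} \;-\; r\sum_{k\ge 0} g_k I_1^k
\end{equation*}
and matching coefficients of $I_1^k$ for $k\ge 2$ yields the recursion $g_{k-2}=r\,g_k$. Iterating gives $g_0=r^j g_{2j}$ for every $j$, so $g_0$ has order $\ge 2j$ for all $j$, forcing $g_0=0$; the same argument applied to odd indices gives $g_1=0$, and an induction then shows $g\equiv 0$. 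The $I_1^0$ and $I_1^1$ coefficients then give $f_0=-rg_0=0$ and $f_1=-rg_1=0$, which is uniqueness.

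The main obstacle is really bookkeeping rather than depth: one must simultaneously check that the division procedure makes sense as a formal, rather than polynomial, operation (using that $r$ lies in the maximal ideal so that $r^k\to 0$), and that the grading by absence of constant term is preserved under the decomposition, so that the equality holds in $\Retdv/{\sim}$ as a direct sum and not merely modulo $\sim$. Once these two points are handled, existence and uniqueness combine to give the announced splitting.
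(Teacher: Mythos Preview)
The paper does not prove this lemma; it is stated with a reference to Cushman--Bates and then used. Your Weierstrass-division argument therefore supplies a proof the paper omits, and the overall strategy is exactly right: $p(I_1)=I_1^2-r$ is a distinguished polynomial over $\R[[I_2,I_3,I_4]]$, formal division gives a unique remainder $f_0+I_1f_1$, and your existence and uniqueness computations are both correct.

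There is, however, a real slip in the constant-term bookkeeping. You correctly argue that $f\in\Retdv$ forces $c_0$ (hence $f_0$) to lie in $\Rtdv$. But your sentence ``$I_1 f_1$ trivially has no constant term, which is how $I_1 f_1$ lies in the summand $I_1\Rtdv$'' is a non sequitur: $I_1f_1$ having vanishing constant term does not place $f_1$ in $\Rtdv$. Indeed it cannot: take $f=I_1\in\Retdv$, for which $f_0=0$ and $f_1=1\notin\Rtdv$. What your argument actually proves is
\[
   \Retdv/{\sim}\;=\;\Rtdv\;\oplus\;I_1\,\R[[I_2,I_3,I_4]],
\]
with no subscript $0$ on the second factor; the lemma as printed is slightly imprecise on this point. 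Nothing later in the paper relies on $f_1$ lacking a constant term (the first instance is $H_2=I_1$, which has $f_1=1$), so once you flag this and state the decomposition you have really established, your proof is complete.
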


When chosen in this last space the function~$\hat{f}$ in
theorem~\ref{the:schwartz} is unique.
Now that we know the generators of the invariants we can write $H$
and~$H_2$ as functions of these.
In particular, we have $H_2(I) = I_1$.

\subsection{Reduction of the $S^1$--symmetry:
            Hamiltonian systems on~$S^2$}
\label{sec:redsphere}
We are primarily interested in the flow of~$H$.
Since the flow of~$H$ and the $S^1$--action commute
($H$ and~$H_2$ Poisson commute), the orbits of $H$ through an
$S^1$--orbit are equivalent.
Therefore we wish to reduce to the orbit space $\R^4 / S^1$ where
points correspond to $S^1$--orbits on~$\R^4$.
The projection mapping 
\begin{equation}
\label{hilbertmapping}
   (q,p) \mapsto I
\end{equation}
defined in lemma~\ref{lem:rgens} just does that.
It allows us to reduce the dynamics of $H$ on~$\R^4$ to a
$2$--dimensional phase space.

The $S^1$--action is generated by the vector field~$\X{H_2}$.
Now $H_2$ is preserved by its own flow, therefore the $S^1$--action
preserves $I_1$ which defines a $3$--sphere
\begin{equation*}
   \left\{ \; (q, p \in \R^4 \mitdE
   h_2 = I_1 = \halfje(q_1^2 + p_1^2 + q_2^2 + p_2^2) \; \right\}
   \enspace .
\end{equation*}
As $H$ and~$H_2$ Poisson commute, the flow of~$H$ also preserves
this $3$--sphere.
Because of the syzygy $I_1^2 = I_2^2 + I_3^2 + I_4^2$ the projection
mapping takes the flow of~$H$ to a $2$--sphere in the reduced phase
space; the reduced phase space is determined by $I_1 = h_2$,
$I_1^2 = I_2^2 + I_3^2 + I_4^2$.
The reduced dynamics of~$H$ can simply be characterised by the level
$h$ of~$H$.
This means that an orbit of the reduced flow of~$H$ is determined by
the equations
\begin{align*}
   (H, H_2) &= (h, h_2) \\
   I_2^2 + I_3^2 + I_4^2 &= h_2^2
   \enspace .
\end{align*}
The reduced dynamics of~$H$ consists of curves on a $2$--sphere.
Note that in order to know the time parametrisation of these curves
we still have to solve a generally difficult differential equation.
But we do have a full geometric characterisation.

This leads us to the following.
We consider the set of smooth $S^1$--invariant mappings
$\cinf(\R^4,\R^2)^{S^1}$ of the form $(H, H_2)$.
The reduced dynamics of~$H$ is determined once we specify its
value by $(H, H_2) = (h, h_2)$.
In the next section we address the question whether a
polynomial~$H$ exists such that this mapping is stable in the
sense of singularity theory.

\begin{remark}\strut
\begin{enumerate}[topsep=0ex,partopsep=0px,itemsep=0px]
\item
For a far more complete account of general \emph{regular reduction}
see for example~\cite{am1987, arnold1980}.
More details about the $1{:}1$~resonance can be found
in~\cite{cb2015} where the projection mapping~\eqref{hilbertmapping}
is shown to be the Hopf mapping from $S^3$ to~$S^2$.
\item
Other resonances like $k{:}l$ give rise to a different reduced phase
space, having singularities.
These arise from non-trivial isotropy subgroups of the
$S^1$--symmetry group in these cases.
They again turn up in new generators with a higher order syzygy.
In $4$--dimensional resonant Hamiltonian systems the situation is
relatively simple, there are four generators and one syzygy.
In higher dimensions both the number of generators and the number of
syzygies depend on the resonance, {\it i.e.}\ on the ratios
$k_1: k_2 : \cdots : k_n$, making it computationally difficult.
Then the Gr\"obner basis algorithm is indispensable.
\end{enumerate}
\end{remark}

Both sides of the syzygy define a \emph{Casimir element}, i.e.\
their Poisson brackets with the~$I$ vanish.
A straightforward calculation yields table~\ref{tab:pbrg} of Poisson
brackets.

\begin{table}[hbtp]
\begin{equation*}
\begin{array}{l|rrrr}
\{\cdot,\cdot\} & I_1 & I_2  & I_3  & I_4  \\\hline
I_1             & 0   & 0    & 0    & 0    \\
I_2             & 0   & 0    &-2I_4 & 2I_3 \\
I_3             & 0   & 2I_4 & 0    &-2I_2 \\
I_4             & 0   &-2I_3 & 2I_2 & 0    \\
\end{array}
\end{equation*}
\caption{\textit{
Poisson bracket of the real generators $I$ of the invariants.
\label{tab:pbrg}}}
\end{table}

The invariants from lemma~\ref{lem:rgens} are sometimes called Hopf
variables.
Indeed, $I_1$ generates the $S^1$--symmetry~\eqref{eq:s1actie} and
hence is an integral of motion for every Hamiltonian system with
that symmetry.
The Hopf mapping
\begin{equation*}
\begin{array}{cccc}
   (I_2, I_3, I_4) : & S_{2 I_1}^3 \longrightarrow  & S_{I_1^2}^2
\end{array}
\end{equation*}
from the $3$--sphere
\begin{equation*}
   S_{2 I_1}^3  \;\; = \;\; \left\{ \;  (q, p) \in T^*\R^2
   \mitdE  q_1^2 + q_2^2 + p_1^2 + p_2^2 = 2 I_1  \; \right\}
\end{equation*}
to the $2$--sphere
\begin{equation*}
   S_{I_1^2}^2  \;\; = \;\; \left\{ \;  (I_2, I_3, I_4) \in \R^3
   \mitdE  I_2^2 + I_3^2 + I_4^2 = I_1^2  \; \right\}
\end{equation*}
performs the reduction to one degree of freedom by identifying
points related through~\eqref{eq:s1actie}.

The phase portraits are obtained by intersecting, within~$\R^3$, the
level sets of the Hamiltonian $H = H(I_2, I_3, I_4)$ with~$S^2$.
Where $H$ is a Morse function, this yields finitely many centres and
saddles, with generically no heteroclinic connections between the
latter.
Under variation of parameters local and global bifurcations may
occur.

\section{The universal unfolding}\label{sec:uniunfo}
In this section we state our main theorem.
First we provide a context for the theorem by introducing the notion
of stable mappings under left-right-equivalence.

\subsection{Equivalence classes for $S^1$--invariant
            Hamiltonian systems}
\label{sec:aequiv}
The meaning of `universal unfolding' depends on the universe in which
we work and the notion of equivalence.
As explained in section~\ref{sec:redsphere} we consider Hamiltonian
systems on~$\R^4$ that are $S^1$--invariant and can be reduced to~$S^2$.
If we content ourselves with characterising the reduced dynamics of~$H$
by the orbits only we just need to specify values of $H$ and~$H_2$.
That is the orbits of the reduced Hamiltonian systems are the fibres of
the mapping $(H, H_2)$.
Note however that $H_2(I) = I_1$ and $H_2$ is an integral of the
Hamiltonian system.
So~$I_1$ is constant and therefore not to be considered as a variable
but rather a parameter.
Furthermore note that the fibres of the mappings $(H, H_2)$
and $(H, H_2^2)$ are identical.
Using the relation of the generators of the invariants~$I$, we have
$H_2(I)^2 = I_1^2 = I_2^2 + I_3^2 + I_4^2$.
This leads us to define $K(I) = H_2(I)^2$ and consider the mapping
$\cF(I) = (H(I), K(I))$ on our universe $\cinf(\R^4, \R^2)^{S^1}_0$,
the $S^1$--invariant $\cinf$--mappings from $\R^4$ to~$\R^2$ taking
$(0,0)$ to $(0,0)$.

A natural notion of equivalence on $\cinf(\R^4, \R^2)^{S^1}_0$ is
provided by so called \emph{left-right-equivalences}, see
definition~\ref{def:lraequiv} below.
For if $\cF$ and~$\cG$ are left-right-equivalent then the fibres
of $\cF$ and~$\cG$ are diffeomorphic.
This in turn implies that the orbits of the $S^1$--invariant
Hamiltonian systems in $\cF=(H, K)$ and $\cG=(H', K')$ can be
mapped to each other by a simple diffeomorphism.

\begin{definition}\label{def:lraequiv}
The mappings $\cF, \cG \in \cinf(\R^4, \R^2)^{S^1}_0$ are called
\emph{left-right-equivalent} if $(\psi, \phi) \in \Diff(\R^2)_0 \times
\Diff(\R^4)^{S^1}_0$ exists such that $(\psi, \phi) \cdot \cF = \cG$,
where $(\psi, \phi) \cdot \cF = \psi \circ \cF \circ \phi$.
\end{definition}

\subsubsection{Stable $S^1$--invariant mappings,
               co-dimension and unfolding}
\label{sec:mapstab}
The idea of stability of a mapping~$\cF$ is that every mapping $\cG$
nearby~$\cF$ is equivalent to~$\cF$, or put differently, that $\cG$
is an element of the orbit of~$\cF$ under left-right-equivalence.
Here we give a short overview in a series of definitions and
theorems.
\begin{definition}\label{def:lrorbit}
The \emph{orbit} of $\cF \in \cinf(\R^4, \R^2)^{S^1}_0$ under
left-right-equivalences is given by
\begin{equation*}
   Orb_{\cF} = \{(\psi, \phi) \cdot \cF \;|\;
   (\psi, \phi) \in \Diff(\R^2)_0 \times \Diff(\R^4)^{S^1}_0\}.
\end{equation*}
\end{definition}
To define `nearby' we use the definition of a deformation.
\begin{definition}\label{def:defo}
A \emph{deformation (or unfolding)} of a mapping
$\cF \in \cinf(\R^4, \R^2)^{S^1}_0$ is a $\cinf$--mapping
$\cF : \R^4 \times \R^p \longrightarrow \R^2$ defining
a family of $S^1$--equivariant $\cF_{\nu}$, $\nu \in \R^p$,
such that $\cF_0 = \cF$.
\end{definition}
This allows to formulate a parametric version of $\cF$ being an
interior point of the orbit of~$\cF$.
\begin{definition}\label{the:stab}
A mapping $\cF \in \cinf(\R^4, \R^2)^{S^1}_0$ is called \emph{stable} if
for every deformation $\cF_{\nu}$ there is an open neighbourhood $U$
of $0 \in \R^p$ such that for all $\nu \in U$, $\cF_{\nu} \in Orb_{\cF}$.
\end{definition}
The conditions of stability in this sense are hard to check.
The conditions of \emph{infinitesimal stability} are much easier to
check and this notion of stability turns out to be equivalent with
the previous one.
\begin{definition}\label{def:infistab}
$\cF$ is called \emph{infinitesimally stable} if the tangent space of
$Orb_{\cF}$ at~$\cF$ is equal to the tangent space of
$\cinf(\R^4, \R^2)^{S^1}_0$ at~$\cF$.
\end{definition}
A proof of the next theorem can be found in \cite{martinet1982}.
\begin{theorem}\label{the:infistab}
A mapping is stable if and only if it is infinitesimally stable.
\end{theorem}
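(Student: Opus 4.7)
The plan is to follow the classical Mather strategy for equivalence of stability and infinitesimal stability, adapted throughout to respect the $S^1$--action. Two directions must be proved, and they are quite asymmetric in difficulty.

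For the easy direction (stable $\Rightarrow$ infinitesimally stable), I would take an arbitrary tangent vector $\eta$ to $\cinf(\R^4,\R^2)^{S^1}_0$ at $\cF$, form the one-parameter deformation $\cF_t = \cF + t\eta$, and apply stability to obtain, for small $t$, a pair $(\psi_t, \phi_t) \in \Diff(\R^2)_0 \times \Diff(\R^4)^{S^1}_0$ with $\psi_t \circ \cF_t \circ \phi_t = \cF$. The key subtlety is to select these families smoothly in $t$ so that differentiation at $t=0$ is legitimate; once that is arranged, the derivative gives an expression $\eta = d\psi_0/dt \cdot \cF + d\cF(d\phi_0/dt)$ that manifestly lies in the tangent space of $Orb_{\cF}$.

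For the hard direction, the strategy is to show that any deformation $\cF_{\nu}$ of an infinitesimally stable $\cF$ lies, for $\nu$ small, in $Orb_{\cF}$. First I would connect $\cF$ to $\cF_{\nu}$ by the straight-line homotopy $\cF_{t\nu}$, $t \in [0,1]$, and attempt to build time-dependent vector fields $\xi_t$ on $\R^2$ and $S^1$--invariant $X_t$ on $\R^4$ whose flows $\psi_t$, $\phi_t$ satisfy $\psi_t \circ \cF_{t\nu} \circ \phi_t = \cF$. Differentiating this identity in $t$ produces the homological or \emph{Thom--Levine equation}
\begin{equation*}
   \partial_t \cF_{t\nu} \;\; = \;\; \xi_t \circ \cF_{t\nu} \, + \, d\cF_{t\nu}(X_t),
\end{equation*}
whose solvability at $t=0$ (with $X_0$ an $S^1$--invariant vector field germ) is precisely infinitesimal stability. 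The goal is to upgrade this to solvability for all $t \in [0,1]$ and then integrate the resulting vector fields.

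The main obstacle, and the technical heart of the argument, is propagating the infinitesimal splitting along the path $t \mapsto \cF_{t\nu}$ in the $S^1$--equivariant category. In the non-equivariant case this is exactly where Malgrange's preparation theorem enters; here one needs its equivariant counterpart (Poénaru's theorem), which guarantees that if a certain finitely generated module over the pulled-back function ring is generated by prescribed elements modulo the maximal ideal, then it is generated by those same elements, \emph{equivariantly}. A further delicate point is the integration step: because $X_t$ is $S^1$--invariant, its flow commutes with the $S^1$--action and therefore defines an element of $\Diff(\R^4)^{S^1}_0$, so the produced equivalence actually lives in the correct group. Once the equivariant preparation step is secured, the remaining work is routine: patch $t=0$ solvability to a neighbourhood in $(t,\nu)$, integrate the time-dependent vector fields, and conclude $\cF_{\nu} \in Orb_{\cF}$ for $\nu$ near $0$, which is the asserted stability.
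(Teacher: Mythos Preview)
The paper does not actually prove this theorem: it simply records ``A proof of the next theorem can be found in~\cite{martinet1982}'' and moves on. So there is no in-paper argument to compare against; the authors are invoking the classical Mather theory (in its equivariant form, for which they also cite Po\'enaru~\cite{poenaru1976}).

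Your sketch is precisely that classical route --- Thom--Levine homotopy plus (equivariant) Malgrange preparation for the hard direction, and differentiation of a trivialising family for the easy one --- so you have correctly reconstructed what the cited references contain. One comment on the easy direction: with the paper's Definition~\ref{the:stab}, stability only asserts that each $\cF_\nu$ lies in $Orb_{\cF}$ for small~$\nu$, not that the equivalences $(\psi_\nu,\phi_\nu)$ can be chosen to depend smoothly on~$\nu$. You flag this yourself, but it is a genuine gap rather than a formality: the standard fix is to show first that stability in this sense implies triviality of every deformation as a \emph{family} (i.e.\ existence of a smooth $(\psi_\nu,\phi_\nu)$ with $(\psi_0,\phi_0)=\mathrm{id}$), which in turn uses the same preparation-theorem machinery as the hard direction. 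So the two implications are less asymmetric than your outline suggests, and the ``easy'' direction is not quite a one-liner from the definitions given here.
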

Stable mappings form an open and dense subset of
$\cinf(\R^4, \R^2)^{S^1}_0$, see~\cite{poenaru1976}.
A mapping that fails to be stable has therefore non-zero
co-dimension
\begin{definition}\label{def:equivfamilies}
Two deformations $\cF_{\nu}$ and $\cG_{\mu}$ are
{\it left-right-equivalent} if there are
$(\psi_{\nu}, \phi_{\nu})$ and $\mu(\nu)$ with
$\psi_{\nu} \circ \cF_{\nu} \circ \phi_{\nu} = \cG_{\mu(\nu)}$.
\end{definition}
This allows to generalize the previous discussion of mappings to
deformations.
\begin{definition}\label{def:versal}
A versal unfolding is a stable deformation.
\end{definition}
The minimal number of parameters of a versal unfolding of a
mapping~$\cF_0$ coincides for $\cinf(\R^4, \R^2)^{S^1}_0$ with the
co-dimension of~$\cF_0$.

\subsubsection{The tangent space of $Orb_{\cF}$ at $(H, K)$}
\label{sec:tan}
Let $\cX(\R^4)$ be the Lie algebra of $\Diff(\R^4)_0$ and
$\cX(\R^4)^{S^1}$ be the Lie algebra of $\Diff(\R^4)^{S^1}_0$.
\begin{lemma}\label{lem:tan}
The tangent space of $Orb_{\cF}$ of $\cF \in \cinf(\R^4, \R^2)^{S^1}_0$ at
$(H, K)$ is given by
\begin{equation*}
   \{ X(\cF) + d\cF(Y) \;|\; X \in \cX(\R^2),\; Y \in \cX(\R^4)^{S^1} \}.
\end{equation*}
\end{lemma}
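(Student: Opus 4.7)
The plan is to apply the standard recipe for computing the tangent space to a group-action orbit: parametrise smooth paths through the identity in $\Diff(\R^2)_0 \times \Diff(\R^4)^{S^1}_0$, push them forward along the action $(\psi,\phi) \cdot \cF = \psi \circ \cF \circ \phi$, and differentiate at $t=0$. Both inclusions then reduce to a routine chain-rule computation; the only genuinely non-trivial point is that the Lie algebra of the product group is indeed $\cX(\R^2) \times \cX(\R^4)^{S^1}$.

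For the inclusion $\supseteq$, pick arbitrary $X \in \cX(\R^2)$ and $Y \in \cX(\R^4)^{S^1}$. Let $\psi_t$ be the local flow of $X$ and $\phi_t$ the local flow of $Y$; both fix the origin because the vector fields vanish there, and $\phi_t$ is $S^1$-equivariant because $Y$ commutes with the infinitesimal generator of the $S^1$-action, so $(\psi_t,\phi_t) \in \Diff(\R^2)_0 \times \Diff(\R^4)^{S^1}_0$ for $|t|$ small. The curve $t \mapsto \psi_t \circ \cF \circ \phi_t$ lies in $Orb_{\cF}$ and starts at $\cF$, and chain rule plus product rule give, at each $x$,
\[
\frac{d}{dt}\Big|_{t=0} \psi_t\bigl(\cF(\phi_t(x))\bigr) \;=\; X\bigl(\cF(x)\bigr) \,+\, d\cF(x)\bigl(Y(x)\bigr),
\]
which is the claimed form $X(\cF) + d\cF(Y)$.

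For the inclusion $\subseteq$, any tangent vector at $\cF$ is by definition the derivative at $t=0$ of some smooth path $(\psi_t, \phi_t) \cdot \cF$ with $(\psi_0, \phi_0) = (\mathrm{id}, \mathrm{id})$. Setting $X := \frac{d}{dt}\psi_t\big|_{t=0}$ and $Y := \frac{d}{dt}\phi_t\big|_{t=0}$, the same chain-rule computation produces $X(\cF) + d\cF(Y)$. It remains to verify $X \in \cX(\R^2)$ and $Y \in \cX(\R^4)^{S^1}$: differentiating $\psi_t(0)=0$ and $\phi_t(0)=0$ at $t=0$ yields $X(0)=0$ and $Y(0)=0$, while differentiating the equivariance identity $\phi_t \circ R_\varphi = R_\varphi \circ \phi_t$ at $t=0$ yields $Y \circ R_\varphi = R_\varphi \circ Y$ for every $\varphi \in S^1$, so $Y$ is $S^1$-equivariant.

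The only real subtlety, and the step I would emphasise, is that everything has to be read in the germ category of section~\ref{sec:mapstab}: flows of $X$ and $Y$ exist only on arbitrarily small neighbourhoods of $0$ and for small $|t|$, but this is exactly enough to represent germs of diffeomorphisms and their velocity vectors, and the chain rule passes to germs without change. Once this is granted, the argument is the standard identification of the Lie algebra of a diffeomorphism group with the corresponding vector fields, applied to the $S^1$-equivariant subgroup via the observation that the flow of an $S^1$-equivariant vector field is $S^1$-equivariant and conversely; a thorough account in the non-equivariant setting can be found in~\cite{martinet1982}.
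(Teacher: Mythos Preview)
Your proof is correct and follows essentially the same approach as the paper: represent near-identity left-right transformations by flows $(\e^{tX}, \e^{tY})$ of vector fields $X \in \cX(\R^2)$, $Y \in \cX(\R^4)^{S^1}$ and differentiate the action at $t=0$ via the chain rule to obtain $X(\cF) + d\cF(Y)$. Your version is more carefully fleshed out (both inclusions, verification that $Y$ is equivariant and origin-preserving, the germ caveat), but the underlying idea is identical to the paper's brief argument.
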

\begin{proof}
For every near-identity transformation
$(\psi, \phi) \in \Diff(\R^2)_0 \times \Diff(\R^4)^{S^1}_0$
there exist $X \in \cX(\R^2)$ and $Y \in \cX(\R^4)^{S^1}$
such that for some $t \in R$ we have
$(\psi, \phi) = (\e^{t X}, \e^{t Y})$.
Then the tangent vectors are
$\frac{d}{dt}(\e^{t X} \circ \cF \circ \e^{t Y})|_{t=0} = X(\cF) + d\cF(Y)$.
\end{proof}
Taking a closer look at the tangent space of $Orb_{\cF}$ at $\cF = (H, K)$
in lemma~\ref{lem:tan}; we explicitly have
\begin{equation}\label{eq:tan}
   X(\cF) + d\cF(Y) = (X_1(H, K) + Y(H), X_2(H, K) + Y(K)).
\end{equation}
In this expression $X$ is any vector field on $\R^2$, but $Y$ is
an $S^1$--equivariant vector field on~$\R^4$.
Using theorem~\ref{the:infistab} we have to check that every
$S^1$--equivariant map germ can be written as
$(X_1(H, K) + Y(H), X_2(H, K) + Y(K))$
for a suitable choice of $X$ and~$Y$.

\subsubsection{The restricted tangent space of $Orb_{\cF}$ at $(H, K)$}
\label{sec:rtan}
The $S^1$--equivariant vector fields are such that $Y(K)$ can be any
function of degree 2 and higher in the set of $S^1$--invariant
functions on~$\R^4$.
This follows from an explicit calculation of these vector fields in
section~\ref{sec:vvs}.
Thus the stability of~$\cF$ is determined by the first component.
More precisely we have the following.
\begin{proposition}\label{pro:restriction}
The co-dimension of $(H, K)$ in $\cinf(\R^4, \R^2)^{S^1}_0$ with the
full group of left-right-equivalences is equal to the co-dimension
of~$H$ in $\cinf(\R^4)^{S^1}_0$ with the group of
left-right-equivalences that fix~$K$.
\end{proposition}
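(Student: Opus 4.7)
The strategy is to use the description of the tangent space in Lemma~\ref{lem:tan} together with equation~\eqref{eq:tan}, and then exploit the surjectivity of the map $Y \mapsto Y(K)$ (for $Y \in \cX(\R^4)^{S^1}$) onto $\cinf(\R^4)^{S^1}_0$. This surjectivity is exactly the statement announced at the start of section~\ref{sec:rtan} and will be justified explicitly in section~\ref{sec:vvs}; for the purposes of this proof I would take it as granted.

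\textbf{Step 1: Surjectivity of the second projection.} Write $\mathcal{V} \becomes T_\cF Orb_\cF \subseteq \cinf(\R^4,\R^2)^{S^1}_0$ and let $\pi_2 : \cinf(\R^4,\R^2)^{S^1}_0 \longrightarrow \cinf(\R^4)^{S^1}_0$ be projection onto the second factor. By~\eqref{eq:tan}, the image of $\mathcal{V}$ under $\pi_2$ contains all elements of the form $Y(K)$ with $Y \in \cX(\R^4)^{S^1}$. By the cited computation, these already exhaust $\cinf(\R^4)^{S^1}_0$, so $\pi_2(\mathcal{V}) = \cinf(\R^4)^{S^1}_0$.

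\textbf{Step 2: Linear algebra reduction.} Apply the following elementary fact: if $V \subseteq A \oplus B$ is a linear subspace with $\pi_B(V) = B$, then the natural map $A \to (A \oplus B)/V$, $a \mapsto (a,0) + V$, induces an isomorphism $A / V_0 \; \cong \; (A \oplus B)/V$, where $V_0 = \{ a \in A : (a,0) \in V \}$. Indeed, surjectivity follows because any $(a,b)+V$ can be corrected, using some $(a',b) \in V$ furnished by the hypothesis, to lie in $A \oplus \{0\}$; injectivity is immediate. Applying this with $A = \cinf(\R^4)^{S^1}_0$ viewed as the first component and $B$ the second component, Step~1 gives
\begin{equation*}
   \cinf(\R^4,\R^2)^{S^1}_0 / \mathcal{V} \;\; \cong \;\;
   \cinf(\R^4)^{S^1}_0 / \mathcal{V}_0,
\end{equation*}
where $\mathcal{V}_0 = \{ u \in \cinf(\R^4)^{S^1}_0 : (u,0) \in \mathcal{V}\}$.

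\textbf{Step 3: Identification of $\mathcal{V}_0$ with the restricted tangent space.} Unfolding the definition, $(u,0) \in \mathcal{V}$ means there exist $X_1, X_2 \in \cinf(\R^2)$ and $Y \in \cX(\R^4)^{S^1}$ with
\begin{equation*}
   u = X_1(H,K) + Y(H), \qquad X_2(H,K) + Y(K) = 0 .
\end{equation*}
On the other hand, a one-parameter family $(\psi_t, \phi_t) \in \Diff(\R^2)_0 \times \Diff(\R^4)^{S^1}_0$ fixes~$K$ infinitesimally, i.e.\ satisfies $\psi_{t,2}(H\circ\phi_t, K\circ\phi_t) = K$ to first order in~$t$, precisely when $X_2(H,K) + Y(K) = 0$ (using $\psi_{0,2}(h,k)=k$). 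Under such a family, the induced tangent vector on $H$ through $H \mapsto \psi_{t,1}(H\circ\phi_t, K\circ\phi_t)$ is exactly $X_1(H,K) + Y(H)$, using $\psi_{0,1}(h,k)=h$. Hence $\mathcal{V}_0$ coincides with the tangent space to the orbit of~$H$ under left-right-equivalences that fix~$K$. Combined with Step~2, this gives equality of the two co-dimensions.

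\textbf{Main obstacle.} The only substantive input is the surjectivity of $Y \mapsto Y(K)$ claimed in Step~1; everything else is a clean linear-algebraic bookkeeping exercise and a formal computation of derivatives at the identity. The surjectivity rests on the explicit description of the $S^1$-equivariant vector fields on~$\R^4$ promised in section~\ref{sec:vvs}, which must deliver enough freedom in~$Y$ to hit every $S^1$-invariant germ vanishing at the origin (automatically of degree $\ge 2$ since the generators $I_1,\ldots,I_4$ are homogeneous of degree~$2$). Once that is in hand, the proposition follows with no further work.
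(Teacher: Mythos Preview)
Your argument is exactly the approach the paper takes: the paper's entire justification for the proposition is the sentence preceding it, namely that ``$Y(K)$ can be any function of degree 2 and higher'' (deferred to section~\ref{sec:vvs}), whence ``the stability of~$\cF$ is determined by the first component'', and your Steps~1--3 simply make that implication precise via the short exact sequence $0 \to \cinf/\mathcal V_0 \to (\cinf\oplus\cinf)/\mathcal V \to \cinf/\pi_2(\mathcal V) \to 0$. The one point to watch is that the paper's phrase ``degree~2 and higher'' is meant in the invariants~$I$ (this is what the computation in section~\ref{sec:vvs} actually delivers, since $Y(K)=2(f_2 I_2+f_3 I_3+f_4 I_4)$ with $f_j\in\mathfrak m$), so your identification of this with ``every $S^1$--invariant germ vanishing at~$0$'' via degree in $(q,p)$ is slightly too generous; in practice the paper sidesteps this by passing to the $\R^3$ picture with $I_1$ a parameter (section~\ref{sec:vvs}), and you should read the proposition in that setting.
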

Therefore we restrict to vector fields in $Y \in \cX(\R^4)^{S^1}$
such that $X_2(H, K) + Y(K) = 0$.
Or, from a slightly different point of view, we look for a normal
form of the mapping $\cF = (H, K)$.
But the second component can already be regarded as being in normal
form.
Therefore we may restrict to transformations that preserve~$K$,
that is $X_2(H, K) + Y(K) = 0$.
\begin{lemma}\label{lem:aequivxfix}
The set of $S^1$--equivariant vector fields~$Y$ with
$Y(K) \in \Rhk$ can be decomposed as the direct sum of
two modules.
The first is a module over~$\Ris$ and consists of vector fields
$Y \in \cX(\R^4)^{S^1}$ taking $K$ to zero.
The second is a module over $\Rhk$, generated by vector
fields $Y \in \cX(\R^4)^{S^1}$ taking $K$ to $K$ or to~$H$.
\end{lemma}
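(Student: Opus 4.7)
The plan is to analyze the evaluation map $\Phi\colon \cX(\R^4)^{S^1} \to \Ris$ defined by $\Phi(Y) = Y(K)$, restricted to $V \becomes \Phi^{-1}(\Rhk)$. First I would record that $\Phi$ is a homomorphism of $\Ris$-modules: for $f \in \Ris$ and $Y \in \cX(\R^4)^{S^1}$, the product $fY$ is again $S^1$-equivariant and $(fY)(K) = f \cdot Y(K)$. Hence $V_0 \becomes \ker\Phi$ is automatically an $\Ris$-submodule of $\cX(\R^4)^{S^1}$, providing the first summand. No additional structural input is needed for this part.

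Next I would produce explicit generators of the second summand. A natural choice for $Y_1$ with $Y_1(K) = K$ is the scaled Euler field $Y_1 \becomes \tfrac{1}{4}\bigl(q_1\bvv{q_1} + p_1\bvv{p_1} + q_2\bvv{q_2} + p_2\bvv{p_2}\bigr)$, which is $S^1$-invariant (radial scaling commutes with rotations) and satisfies $Y_1(K) = \tfrac{1}{4}\,E(I_1^2) = \tfrac{1}{4}\cdot 2I_1\cdot 2I_1 = K$. For $Y_2$ with $Y_2(K) = H$ I would invoke the explicit module description of $\cX(\R^4)^{S^1}$ from Section~\ref{sec:vvs} together with the surjectivity of $\Phi$ onto invariants of the appropriate degree (the same fact alluded to just before Proposition~\ref{pro:restriction}).

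Given any $Y \in V$, I would decompose $\Phi(Y) \in \Rhk$ via the unique splitting
\begin{equation*}
  \Phi(Y) \;=\; f_K \cdot K \,+\, f_H \cdot H, \qquad f_K \in \R[[H,K]],\ f_H \in \R[[H]],
\end{equation*}
obtained by separating the monomials $H^i K^j$ of $\Phi(Y)$ according to whether $j \geq 1$ (absorbed into $f_K K$) or $j = 0$ (absorbed into $f_H H$). Setting $Y' \becomes f_K\, Y_1 + f_H\, Y_2 \in M_2$, where $M_2$ denotes the $\Rhk$-module spanned by $Y_1$ and $Y_2$, the difference $Y - Y'$ lies in $V_0$ by construction, so $V = V_0 + M_2$.

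The main obstacle is verifying directness, and this is where the normal form pays off. Suppose $aY_1 + bY_2 \in V_0$ with $a \in \R[[H,K]]$ and $b \in \R[[H]]$. Then $aK + bH = 0$ in $\R[[H,K]]$. But $aK$ contributes only monomials $H^i K^j$ with $j \geq 1$, while $bH$ contributes only monomials with $j = 0$; these live in complementary subspaces of $\R[[H,K]]$, so $aK = 0$ and $bH = 0$ separately, and since $\R[[H,K]]$ is a domain this forces $a = b = 0$. The potential syzygy $H\,Y_1 - K\,Y_2 \in V_0$ (which would occur for $b = -K \notin \R[[H]]$) is thereby ruled out by the choice of normal form, and the sum is direct: $V = V_0 \oplus M_2$.
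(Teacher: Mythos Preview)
Your argument is considerably more detailed than the paper's: the paper disposes of this lemma in a single sentence, observing only that any $Y$ with $Y(K)\in\Rhk$ is generated by vector fields satisfying $Y(K)=0$, $Y(K)=K$, or $Y(K)=H$. Your treatment of the generation part --- setting up $\Phi$, identifying $V_0=\ker\Phi$ as an $\Ris$--module, exhibiting $Y_1,Y_2$, and writing $\Phi(Y)=f_KK+f_HH$ --- is correct and matches the paper's intent while filling in what the paper leaves implicit.

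There is, however, a genuine wrinkle in your directness step. You declare $M_2$ to be the $\Rhk$--module generated by $Y_1$ and $Y_2$, but in the uniqueness argument you restrict to coefficients $a\in\R[[H,K]]$, $b\in\R[[H]]$. These are not the same object: the element $HY_1-KY_2$ lies in the full $\Rhk$--module $M_2$ (since $H,-K\in\Rhk$) and also in $V_0$ (since $\Phi(HY_1-KY_2)=HK-KH=0$), yet it is a nonzero vector field. Hence $V_0\cap M_2\neq 0$ for the $M_2$ you named, and the sum is not direct in the module-theoretic sense. What your normal-form choice actually establishes is that each $Y\in V$ admits a \emph{unique representation} $Y=(Y-Y')+Y'$ with $Y'=f_KY_1+f_HY_2$ and $f_H\in\R[[H]]$; but the set of such $Y'$ is not closed under multiplication by $K$ and so is not the $\Rhk$--module the lemma names. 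To be fair, the paper's one-line proof does not address directness at all, and for the downstream application (Lemma~\ref{lem:orbh} and the computations of Section~\ref{sec:proof}) only the generation $V=V_0+M_2$ is actually used.
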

\begin{proof}
$S^1$--equivariant vector fields~$Y$ such that
$Y(K) \in \Rhk$ are generated by $S^1$--equivariant
vector fields satisfying one of the three equations $Y(K) = 0$,
$Y(K) = K$ and $Y(K) = H$.
\end{proof}
From now on we consider the restricted tangent space of~$Orb_H$ under
left-right-transformations and we call it~$T_1$.
The restricted tangent space of~$Orb_H$ is again the sum of two
(function) modules $\cJ \oplus \cM$.
Suppose $U_1,\cdots,U_k$ generate the solutions of $Y(K) = 0$ and
$V_1$ and $V_2$ solve $Y(K) = K$ and $Y(K) = H$, respectively.
Furthermore let $F_i = U_i(H)$ for $i \in \{1,\ldots,k\}$ and
$G_j = V_j(H)$ for $j \in \{1,2\}$.
Then we have the following.
\begin{lemma}\label{lem:orbh}
The restricted tangent space of $Orb_H$ is the sum of two modules
$\cJ \oplus \cM$, the first is a module over~$\Ris$ and the second
is a module over~$\Rhk$.
That is, every function $f$ in the tangent space of $Orb_H$ is of
the form
$f = \xi_1 F_1 + \cdots + \xi_k F_k + \eta_0 + \eta_1 G_1 + \eta_2 G_2$,
with $\xi_i \in \Ris$ and $\eta_i \in \Rhk$.
\end{lemma}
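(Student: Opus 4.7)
The plan is to substitute the structural description of $S^1$-equivariant vector fields from Lemma \ref{lem:aequivxfix} into the expression \eqref{eq:tan} for the tangent space, subject to the restriction $X_2(H,K)+Y(K)=0$ coming from Proposition \ref{pro:restriction}. Only the first component $f = X_1(H,K) + Y(H)$ needs to be unpacked; the second-component constraint is automatically solvable within $\Rhk$ once the coefficients of the decomposition of $Y$ are chosen, since $\Rhk$ is closed under the relevant combinations.

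By Lemma \ref{lem:aequivxfix}, any admissible $Y$ can be written as
$$Y \;=\; \sum_{i=1}^{k} \xi_i\, U_i \;+\; \eta_1\, V_1 \;+\; \eta_2\, V_2,$$
with $\xi_i \in \Ris$, $\eta_j \in \Rhk$, and the $U_i, V_j$ the chosen generators satisfying $U_i(K)=0$, $V_1(K)=K$, $V_2(K)=H$. Applying $Y$ to $H$ and using the notation $F_i = U_i(H)$, $G_j = V_j(H)$ yields $Y(H) = \sum_i \xi_i F_i + \eta_1 G_1 + \eta_2 G_2$. The remaining term $X_1(H,K)$ ranges freely over $\Rhk$ and will play the role of $\eta_0$. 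Assembling these contributions produces the asserted form
$$f \;=\; \xi_1 F_1 + \cdots + \xi_k F_k + \eta_0 + \eta_1 G_1 + \eta_2 G_2.$$

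The decomposition $\cJ \oplus \cM$ mirrors this parametrisation. The summand $\cJ$ is the $\Ris$-module generated by $F_1,\ldots,F_k$, coming from vector fields that kill $K$, which is exactly why $\Ris$ is the natural base ring: multiplying such a $Y$ by a $K$-invariant coefficient preserves the property $Y(K)=0$. The summand $\cM$ is the $\Rhk$-module generated by $1, G_1, G_2$; the larger ring $\Rhk$ appears here because $V_1$ and $V_2$ already transport $K$ into $\Rhk$, so $\Rhk$-coefficients leave $Y(K)$ inside $\Rhk$ as required.

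The main point that I would want to check carefully, rather than a real obstacle, is that the two summands are combined additively with independent coefficients, so that the symbol $\oplus$ is justified. This reduces to the independence of the three classes of generators supplied by Lemma \ref{lem:aequivxfix}, together with the observation that $X_1(H,K)$, as the pull-back of an arbitrary function on $\R^2$, realises every element of $\Rhk$ freely and independently from the $G_j$-pieces. Once these bookkeeping matters are settled, the lemma is essentially a direct transcription of Lemma \ref{lem:aequivxfix} obtained by applying the vector fields to $H$ in place of $K$.
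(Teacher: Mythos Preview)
Your proposal is correct and matches the paper's approach; in fact the paper states this lemma without an explicit proof, treating it as an immediate consequence of the preceding set-up (the definitions of $U_i$, $V_j$, $F_i$, $G_j$ and Lemma~\ref{lem:aequivxfix}), which is exactly the route you have spelled out. Your only added caution, the directness of $\cJ \oplus \cM$, is not really addressed in the paper either---later in section~\ref{sec:tangentstruc} it speaks of ``the sum of a module $\cM$ and an ideal $\cJ$'' without insisting on directness---so you are being at least as careful as the authors.
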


Thus the question about the co-dimension and universal unfolding of
the mapping $\cF \in \cinf(\R^4, \R^2)^{S^1}_0$ reduces to finding the
co-dimension and a complement of the first component of the tangent
space of~$\cF$ with respect to restricted left-right transformations.
This in turn can be reformulated as follows.
Let $\GGG$ be the mapping
\begin{equation*}
\begin{array}{cccc}
   \GGG \; : & \Big(\cinf(\R^4,\R)^{S^1}_0\Big)^k \times
   \Big(\cinf(\R^2,\R)\Big)^3_0 & \longrightarrow & \cinf(\R^4,\R)^{S^1}_0 \\
   & (\xi_1,\ldots,\xi_k,\eta_0,\eta_1,\eta_2) & \mapsto &
   X_1(H, K) + Y(H)
\end{array}
\enspace .
\end{equation*}
Then the questions we want to answer are:
\begin{enumerate}[topsep=0ex,partopsep=0px,itemsep=0px]
\item What is the co-dimension of the image of $\GGG$ in~$\Ris$~?
\item If the latter is nonzero, then what is a complement?
\end{enumerate}

\subsection{Statement of main theorem}\label{sec:mainthe}
Our main theorem is about the universal unfolding of the mapping
$(H, K): \cinf(\R^4)^{S^1}_0 \longrightarrow \R^2$ with respect to
restricted left-right-equivalence from the previous section.
That is we consider all left-right transformations that preserve~$K$.
As explained in section~\ref{sec:informal} we are interested in the
fibres of the mapping $(H, K)$.
For this mapping we have the following result.
\begin{theorem}\label{the:uniunfo}
The universal unfolding of the mapping $(H, K)$ with respect to
restricted left-right-equivalence is given by
\begin{equation*}
\begin{aligned}
   H(I;\mu) &= I_1 + a_1 I_2^2 + a_2 I_3^2 + a_3 I_4^2
   + b_1 I_2^3 + b_2 I_3^3 + b_3 I_4^3\\
   & + \mu_1 I_2 + \mu_2 I_3 + \mu_3 I_4 + \mu_4 I_2^3 + \mu_5 I_3^3\\
   K(I) &= I_2^2 + I_3^2 + I_4^2
\end{aligned}
\end{equation*}
provided that the real coefficients $a_1$, $a_2$, $a_3$ and $b_1$,
$b_2$ and $b_3$ satisfy the non-degeneracy condition
\begin{equation*}
   (a_1 - a_2)(a_2 - a_3)(a_3 - a_1) \, b_1 b_2 b_3 \neq 0.
\end{equation*}
The parameters $\mu_4$ and $\mu_5$ are moduli.
\end{theorem}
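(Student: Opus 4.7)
The plan is to exploit the reduction established in Section~\ref{sec:rtan}: it suffices to compute the restricted tangent space $T_1 = \cJ \oplus \cM \subset \Ris$ of $Orb_H$ under $K$-fixing left-right-equivalences, and then exhibit a five-dimensional complement. By proposition~\ref{pro:restriction} this computation settles the full problem.

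First I would identify the generators explicitly using the calculations in Section~\ref{sec:vvs}. Since $K = I_1^2$ descends to a Casimir for the reduced Poisson structure of table~\ref{tab:pbrg}, the $S^1$-equivariant vector fields $U$ with $U(K)=0$ are generated over~$\Ris$ by the Hamiltonian vector fields $\X{I_2}, \X{I_3}, \X{I_4}$, whose action on any invariant is the Poisson bracket. Using table~\ref{tab:pbrg} this gives
\begin{align*}
   F_1 \;=\; \poisson{H}{I_2} &\;=\; 4(a_3-a_2)\, I_3 I_4 \,+\, 6 b_2\, I_3^2 I_4 \,-\, 6 b_3\, I_3 I_4^2, \\
   F_2 \;=\; \poisson{H}{I_3} &\;=\; 4(a_1-a_3)\, I_4 I_2 \,+\, 6 b_3\, I_4^2 I_2 \,-\, 6 b_1\, I_4 I_2^2, \\
   F_3 \;=\; \poisson{H}{I_4} &\;=\; 4(a_2-a_1)\, I_2 I_3 \,+\, 6 b_1\, I_2^2 I_3 \,-\, 6 b_2\, I_2 I_3^2,
\end{align*}
while the two remaining generators are $G_1 = V_1(H)$ and $G_2 = V_2(H)$ produced by the Euler-type field $V_1 = \tfrac14(q_1\vvx{+}\cdots)$ (rescaled so that $V_1(K) = K$, giving $G_1$ equal to a degree-weighted copy of $H$) and a second equivariant field $V_2$ with $V_2(K) = H$ constructed in Section~\ref{sec:vvs}.

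Second, I would analyse $\cJ + \cM$ inside $\Ris$ using the splitting $\Retdv/{\sim} = \Rtdv \oplus I_1\,\Rtdv$ of lemma~\ref{lem:modrel}. The module $\cM$, being a free $\Rhk$-module on $1$, $G_1$, $G_2$, absorbs all of $\Rhk$; since $H = I_1 + \cdots$ and $K$ starts in degree four, $\cM$ covers exactly the directions along powers of~$H$ and $K$ in the two-part splitting and supplies the $I_1$-summand. The content of the theorem therefore lies in a graded cokernel computation of the $\Ris$-module generated by $F_1,F_2,F_3$ inside $\Rtdv$. Working degree by degree, the leading quadratic terms $I_3 I_4$, $I_4 I_2$, $I_2 I_3$ generate the monomial ideal $(I_2 I_3, I_3 I_4, I_4 I_2)$ of $\Rtdv$ exactly when $(a_1-a_2)(a_2-a_3)(a_3-a_1)\neq 0$; hence every monomial in $I_2,I_3,I_4$ containing two distinct generators lies in $\cJ$. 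The pure powers $I_k^n$ ($k=2,3,4$, $n\geq 2$) form the remaining obstruction, and here the cubic corrections enter: multiplying $F_j$ by suitable $\xi_i \in \Ris$ and using $b_1 b_2 b_3 \neq 0$, a Nakayama-style induction shows that all pure powers $I_k^n$ with $n \geq 4$ are expressible modulo $\cJ + \cM$ in terms of lower pure powers. The direction $I_4^3$ is then absorbed using the extra generator $G_2$, while $I_2^3$ and $I_3^3$ remain as genuine moduli.

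Third, assembling the complement, the linear monomials $I_2, I_3, I_4$ cannot lie in $\cJ + \cM$ because $\cJ$ begins in degree two and $\cM$ only contributes combinations of $H$ and $K$, whose lowest-degree contributions are $I_1, I_1^2,\ldots$ — none of which have a nonzero linear component in $I_2,I_3,I_4$. Together with the two surviving cubic directions $I_2^3$ and $I_3^3$, this exhibits $\spansel{I_2, I_3, I_4, I_2^3, I_3^3}$ as a five-dimensional complement, yielding the stated universal unfolding. By theorems~\ref{the:infistab} and~\ref{def:versal} this deformation is versal; minimality follows from the dimension count. Finally, $\mu_4$ and~$\mu_5$ are moduli because any diffeomorphism of $\R^4$ preserving $K$ acts on the coefficients of $I_2^3, I_3^3$ only through scalings that are already fixed by the non-degenerate leading coefficients $a_1,a_2,a_3,b_1,b_2,b_3$.

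The main obstacle I expect is the graded cokernel computation of the previous paragraph: tracking the interaction of the three $F_i$ under $\Ris$-multiplication with the $\Rhk$-contribution from $\cM$ requires a careful bookkeeping of how pure powers $I_k^n$ reduce modulo lower-order monomials, and it is precisely at this point that both non-degeneracy hypotheses, $(a_1-a_2)(a_2-a_3)(a_3-a_1) \neq 0$ and $b_1 b_2 b_3 \neq 0$, are essential — the first to handle the quadratic base case, the second to propagate the reduction to all higher degrees.
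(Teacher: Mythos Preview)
Your overall plan matches the paper's --- reduce to the first component, use the mixed-monomial leading terms of the $F_i$ to hit $B_k^{\flat}$, and treat the pure powers $B_k^{\sharp}$ separately --- but the mechanism you propose for the pure powers is wrong, and this is precisely where the proof lives. Each $F_i$ you compute is divisible by a mixed monomial: your $F_1 = \poisson{H}{I_2}$ has every term divisible by $I_3 I_4$, and likewise for the other two. Hence the entire ideal $\cJ$ sits inside $(I_2 I_3, I_2 I_4, I_3 I_4)$ and can never produce a pure power $I_k^n$, so the sentence ``multiplying $F_j$ by suitable $\xi_i$ and using $b_1 b_2 b_3 \neq 0$'' cannot work. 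Pure powers are reached only through $\cM$: at even degree via the Vandermonde in $a_1,a_2,a_3$ arising from $\eta_0$ and $\eta_2 G_2$ (with $\sL(G_2) = a_1^2 I_2^2 + a_2^2 I_3^2 + a_3^2 I_4^2$), and at odd degree via $\eta_1 G_1$, where $G_1 = V_1(H) - H = b_1 I_2^3 + b_2 I_3^3 + b_3 I_4^3$. That is where the $b_i$ actually enter, and at degree~$3$ it is $G_1$, not $G_2$, that absorbs one of the three cubes.

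The step you are genuinely missing is degree~$5$. There $\cM$ contributes only $K G_1$ and $H G_1$, whose projections onto $B_5^{\sharp}$ give the vectors $(b_1,b_2,b_3)$ and $(a_1 b_1, a_2 b_2, a_3 b_3)$ --- a $2$--dimensional subspace of a $3$--dimensional target. No Nakayama-style induction closes that gap, because $\cM$ is only an $\Rhk$-module, not an $\Rtdv$-module. The paper constructs by hand an element
\[
   F_5 \;=\; \xi_1 I_2 I_3 F_1 + \xi_2 I_2 I_4 F_2 + \xi_3 I_3 I_4 F_3
   + \eta_{01} K^2 + \eta_{02} KH + \eta_{03} H^2
   + \eta_{21} K G_2 + \eta_{22} H G_2
\]
whose degree-$4$ part vanishes identically while its degree-$5$ part projects nontrivially onto $B_5^{\sharp}$; the $3\times 3$ determinant of $K G_1$, $H G_1$, $F_5$ against $B_5^{\sharp}$ comes out as $(a_1-a_2)(a_2-a_3)(a_3-a_1)\,b_1 b_2 b_3$, and this is where the full non-degeneracy hypothesis is actually consumed. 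Without this construction the codimension count is off and the unfolding is not versal.
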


\section{Proof of main theorem}\label{sec:proof}
We now prove our main theorem.
Our starting point is the mapping $\cF = (H, K)$.
We split the higher order terms of~$H$ into two parts,
$H_4$ is of degree $2$ in~$I$, $H_6$ is of degree~$3$.

The proof consists of several steps which we now list.

\begin{enumerate}[1)]\itemsep 0pt
\item
Apply preliminary transformations to~$\cF$ to get rid of as many
coefficients as possible.
\item
Determine the tangent space of $Orb_{\cF}$ at~$\cF$.
\item
Find the $S^1$--equivariant vector fields on~$\R^4$.
\item
Observe that we can restrict to the first component of~$\cF$ using
restricted vector fields.
\item
Observe that we can proceed by degree when we split
$\cinf(\R^4,\R)^{S^1}_0$ as a direct sum of spaces of homogeneous
polynomials.
The cases of relative large degree turn out to be the easiest.
Then we are left with a finite number of low degree cases that
have to be treated separately.
\end{enumerate}

\subsection{Preliminary transformations}\label{sec:prels}
We start with the mapping $\cF = (H, K)$, where $H$ is a polynomial
of degree $3$ in $I$, that is $H = H_2 + H_4 + H_6$.
We assume that symplectic transformations already have been used
exhaustively.
But since we consider $\cF$ in a more general context, more
transformations are allowed.

The first observation is that we can always subtract $H_2$ from~$H$
because $H_2$ is a conserved function in the sense of Hamiltonian
systems.
Thus we have $H = H_4 + H_6$.
Furthermore, since $H_2(I) = I_1$ we consider $I_1$ as a parameter.
Therefore $I_1$ appears at most in the coefficients of~$H$.
So in fact $H$ and $K$ only depend on $I_2$, $I_3$ and~$I_4$,
without further restrictions or relations.
\begin{align*}
   K(I_2, I_3, I_4) &= I_2^2 + I_3^2 + I_4^2\\
   H_4(I_2, I_3, I_4) &= a_1 I_2^2 + a_2 I_3^2 + a_3 I_4^2
   + a_{23} I_2 I_3 + a_{24} I_2 I_4 + a_{34} I_3 I_4\\
   H_6(I_2, I_3, I_4) &= b_1 I_2^3 + b_2 I_3^3 + b_3 I_4^3
\end{align*}
The second observation is that by a transformation from
$id \times SO(3)$ we can always achieve $a_{23} = 0$, $a_{24} = 0$
and $a_{34} = 0$.
Note that such a transformation preserves both $K$ and the
relation $I_1^2 = I_2^2 + I_3^2 + I_4^2$.

\begin{remark}
We may include more third degree terms in $H_6$, like~$I_2 I_4^2$.
However, they turn out to be unimportant.
\end{remark}

\subsection{$S^1$--equivariant vector fields}\label{sec:vvs}
Considering the mapping $(H, K)$ instead of $(H, H_2)$ where $I_1$ is
a parameter, we take $I_2$, $I_3$ and $I_4$ as co-ordinates on $\R^3$
without any restrictions.
Now $(H, K)$ is a mapping in $\cinf(\R^3, \R^2)_0$.
Origin preserving transformations on $\R^3$ are generated by the
vector fields
\begin{equation}\label{eq:vfs}
\begin{aligned}
   X_1 &= I_2 \vvx, & X_2 &= I_3 \vvx, & X_3 &= I_4 \vvx,\\
   X_4 &= I_2 \vvy, & X_5 &= I_3 \vvy, & X_6 &= I_4 \vvy,\\
   X_7 &= I_2 \vvz, & X_8 &= I_3 \vvz, & X_9 &= I_4 \vvz.
\end{aligned}
\end{equation}
To define the restricted tangent space of the mapping $\cF$ we have to
find the vector fields solving $X(K) = 0$, $X(K) = K$ and $X(K) = H$.
\begin{lemma}\label{lem:vfs}
The vector fields solving $X(K) = 0$ are generated by
\begin{equation*}
   U_1 = X_2 - X_4,\;\; U_2 = X_3 - X_7,\;\; U_3 = X_6 - X_8.
\end{equation*}
The vector fields solving $X(K) = K$ and $X(K) = H$ respectively are
generated by
\begin{align*}
   V_1 &= \halfje (X_1 + X_5 + X_9)\\
   V_2 &= \halfje ((a_1 + b_1 I_2) X_1  + (a_2 + b_2 I_3) X_5
   + (a_3 + b_3 I_4) X_9).
\end{align*}
\end{lemma}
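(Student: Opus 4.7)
The plan is to parametrize a general origin-preserving vector field on $\R^3$ as $X = F_2\vvx + F_3\vvy + F_4\vvz$ with each $F_j$ in the maximal ideal of $\Rtdv$, so that $X$ automatically lies in the module spanned by the nine generators $X_k$ of~\eqref{eq:vfs}. A one-line calculation then gives
\begin{equation*}
X(K) \;=\; 2\bigl(F_2 I_2 + F_3 I_3 + F_4 I_4\bigr),
\end{equation*}
so each of the three conditions $X(K)=0$, $X(K)=K$, $X(K)=H$ reduces to a single linear equation on the triple $(F_2,F_3,F_4)$ over $\Rtdv$.

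For $X(K)=0$ the solution set is the first syzygy module of the sequence $(I_2,I_3,I_4)$. The key observation is that this is a regular sequence in $\Rtdv$, so the Koszul complex is exact and the syzygies are generated by the three Koszul relations $(I_3,-I_2,0)$, $(I_4,0,-I_2)$, $(0,I_4,-I_3)$. Translating these triples back into vector-field notation produces exactly $U_1 = X_2-X_4$, $U_2 = X_3-X_7$, $U_3 = X_6-X_8$.

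For the two inhomogeneous equations it suffices to exhibit one particular solution, since the general solution is then this particular one plus the module generated by $U_1, U_2, U_3$. For $X(K)=K$ the Euler-type choice $F_j = \halfje I_j$ works by inspection, yielding $V_1 = \halfje(X_1+X_5+X_9)$. For $X(K)=H$ I would exploit the diagonal form
\begin{equation*}
H \;=\; a_1 I_2^2 + a_2 I_3^2 + a_3 I_4^2 + b_1 I_2^3 + b_2 I_3^3 + b_3 I_4^3
\end{equation*}
produced by the preliminary transformations of section~\ref{sec:prels}: every monomial contains some $I_j$ to a power at least two, so setting $F_j = \halfje(a_j+b_j I_j)I_j$ immediately gives $2\sum F_j I_j = H$, hence
\begin{equation*}
V_2 \;=\; \halfje\bigl((a_1+b_1 I_2)X_1 + (a_2+b_2 I_3)X_5 + (a_3+b_3 I_4)X_9\bigr).
\end{equation*}

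The main obstacle is not the computations, which are essentially automatic once the problem is written in the form above, but rather justifying that the three listed Koszul triples generate \emph{all} syzygies of $(I_2,I_3,I_4)$. This is the one place where an algebraic input (exactness of the Koszul complex on a regular sequence, applied to the ring $\Rtdv$) is genuinely needed, and I would devote one explicit sentence to it; everything else is verification.
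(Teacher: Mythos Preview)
Your proposal is correct and in fact more explicit than the paper's own argument.  The paper parametrises a general vector field as $X = \halfje\sum_{i=1}^9 \xi_i X_i$ with scalar coefficients, computes
\begin{equation*}
   X(K) = \xi_1 I_2^2 + \xi_5 I_3^2 + \xi_9 I_4^2
   + (\xi_2+\xi_4) I_2 I_3 + (\xi_3+\xi_7) I_2 I_4 + (\xi_6+\xi_8) I_3 I_4,
\end{equation*}
and then simply says ``after some straightforward calculations the results follow''.  This direct coefficient comparison immediately produces $U_1,U_2,U_3$ and $V_1,V_2$ as solutions, but it leaves implicit why the $U_i$ generate the \emph{entire} module of solutions of $X(K)=0$ over $\Rtdv$ --- exactly the point you flag as the main obstacle.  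Your parametrisation by three coefficient functions $(F_2,F_3,F_4)$ and appeal to exactness of the Koszul complex on the regular sequence $(I_2,I_3,I_4)$ supplies precisely this missing justification.  So the two approaches agree on the computations but differ in how much algebra is invoked: the paper's is shorter and purely computational, yours closes the module-generation gap with one standard commutative-algebra fact.
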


\begin{proof}
Let $X = \halfje \sum_{i=1}^9 \xi_i X_i$ then
\begin{equation*}
   X(K) = \xi_1 I_2^2 + \xi_5 I_3^2 + \xi_9 I_4^2
   + (\xi_2 + \xi_4) I_2 I_3 + (\xi_3 + \xi_7) I_2 I_4
   + (\xi_6 + \xi_8) I_3 I_4
\end{equation*}
and after some straightforward calculations the results follow.
\end{proof}

\subsection{The structure of the restricted tangent space}
\label{sec:tangentstruc}
The restricted tangent space~$T_1$, see section \ref{sec:rtan}, is the
sum of a module $\cM$ and an ideal $\cJ$ both subsets of~$\Rtdv$.
$\cM$ is a module over $\Rhk$ and generated by the functions $1$,
$G_1$ and~$G_2$.
$\cJ$ is the ideal generated by $F_1,F_2,F_3$.
So if $f \in T_1$ then
$f = \xi_1 F_1 + \xi_2 F_2 + \xi_3 F_3 + \eta_0 + \eta_1 G_1 + \eta_2 G_2$,
with $\xi_i \in \Rtdv$ and $\eta_i \in \Rhk$.

In lemma \ref{lem:vfs} we defined the vector fields
$U_1$, $U_2$, $U_3$, $V_1$ and $V_2$.
Thus we know the generators of $\cJ$ and~$\cM$
\begin{equation*}
\begin{aligned}
   F_1 &\becomes U_1(H) = (a_1 - a_2) I_2 I_3 + \hot\\
   F_2 &\becomes U_2(H) = (a_1 - a_3) I_2 I_4 + \hot\\
   F_3 &\becomes U_3(H) = (a_2 - a_3) I_3 I_4 + \hot\\
   G_1 &\becomes V_1(H) - H = H_6(I_2, I_3, I_4)\\
   G_2 &\becomes V_2(H) = a_1^2 I_2^2 + a_2^2 I_3^2 + a_3^2 I_4^2 + \hot
\end{aligned}
\end{equation*}
Defining $G_1$ as $V_1(H) - H$ instead of~$V_1(H)$ is just convenient
but not essential.
In the definition above we only show the \emph{leading terms} of
$F_1, \ldots, G_2$.

In principle each term in $f \in \Rtdv$ is an infinite series, but
with a term of lowest degree.
For our purposes it makes sense to call this the \emph{degree} of~$f$
and the term with lowest degree the \emph{leading term}.
Recall that the degree is at least~$1$ as we only consider formal
series without constant term.
Before using this to define a filtration on~$T_1$ we formally define
the degree of~$f$ and the leading term.
\begin{definition}
For $0 \neq f \in \Rtdv$ we define the \emph{degree} of $f$ as
$k \in \N$ for which $0 < \lim_{t \to 0} t^{-k} f(tI) < \infty$.
Suppose $k = \degree(f)$ then we call
$\sL(f) = \lim_{t \to 0} t^{-k} f(tI)$ the \emph{leading term}
of~$f$.
\end{definition}
The following properties of degree and leading term are almost
obvious.
\begin{lemma}
Let $f$ and $g$ be functions (germs) in~$\Rtdv$ and let $m$ and~$n$
be monomials in $\Rtdv$, then
\begin{enumerate}[i)]\itemsep 0pt
\item if $m(I) = I^k$ then $\degree(m) = |k|$
\item if $\degree(m) < \degree(n) = l$ then
$\degree(m+n) = \degree(m)$ and $\sL(m + n) = m$
\item $\degree(f+g) = \min(\degree(f), \degree(g))$ and if
$\degree(f) < \degree(g)$ then $\sL(f + g) = \sL(f)$
\item $\degree(f \cdot g) = \degree(f) \cdot \degree(g)$ and
$\sL(f \cdot g) = \sL(f) \sL(g)$
\end{enumerate}
\end{lemma}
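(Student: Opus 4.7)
The plan is to verify each item directly from the definition by using the formal power series expansion of elements of~$\Rtdv$. I would start by fixing notation: every $f \in \Rtdv$ has a unique expansion
\[
f \;=\; \sum_{k} c_k\, I^k,
\]
where $k = (k_2, k_3, k_4) \in \N^3$ with $|k| := k_2 + k_3 + k_4 \geq 1$ and $I^k := I_2^{k_2} I_3^{k_3} I_4^{k_4}$. Substituting $tI$ for $I$ gives $f(tI) = \sum_k c_k\, t^{|k|}\, I^k$, so $t^{-\ell} f(tI)$ has a finite, strictly positive limit as $t \to 0$ (evaluated at any $I$ outside the zero locus of the lowest homogeneous component) precisely when $\ell$ equals the smallest $|k|$ with $c_k \neq 0$. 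This tethers the abstract definition to the concrete notion of \emph{lowest homogeneous component} and identifies $\sL(f)$ as exactly that component. Item~(i) then follows at once: for $m(I) = I^k$ the series has a single monomial, so $m(tI) = t^{|k|} I^k$ and $\degree(m) = |k|$.

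For items~(ii) and~(iii) I would exploit additivity of the expansion. If $\degree(f) < \degree(g)$, then every monomial of $f + g$ of degree $\degree(f)$ comes from $f$ alone, giving $\degree(f+g) = \degree(f)$ and $\sL(f+g) = \sL(f)$; item~(ii) is the special case for monomials. In the equal-degree case one has $\degree(f+g) \geq \min(\degree f, \degree g)$, with equality precisely when $\sL(f) + \sL(g) \neq 0$, so the formula in~(iii) is to be read with the tacit assumption that the leading parts do not cancel (otherwise one simply passes to the next nonzero homogeneous component).

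For~(iv) the product formula follows from expanding the series:
\[
(fg)(tI) \;=\; f(tI)\, g(tI) \;=\; t^{\degree(f) + \degree(g)}\, \sL(f)(I)\, \sL(g)(I) \,+\, \hot,
\]
so $\degree(f \cdot g) = \degree(f) + \degree(g)$ and $\sL(f\cdot g) = \sL(f)\,\sL(g)$, where nonvanishing of $\sL(f)\,\sL(g)$ is automatic because the polynomial ring in $I_2, I_3, I_4$ is an integral domain. Note that the statement as printed reads $\degree(f) \cdot \degree(g)$, which is a typographical slip that should be a sum.

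The main obstacle, if one can call it that, is purely interpretive rather than mathematical: the given definition takes a limit of a function of~$I$ and compares it to real numbers, so one must either evaluate at a generic~$I$ or, equivalently, recast everything in terms of the support of the formal series. Once this is handled all four items reduce to elementary manipulations with power series, and the only genuinely substantive points are the non-cancellation caveat in~(iii) and the sum-versus-product correction in~(iv).
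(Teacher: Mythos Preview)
Your proposal is correct and matches the paper's treatment: the paper gives no proof at all, merely prefacing the lemma with the remark that these properties ``are almost obvious,'' and your direct verification via the homogeneous expansion is exactly the elaboration one would supply. Your observations about the non-cancellation caveat in~(iii) and the typographical slip in~(iv) (the degree of a product is the \emph{sum}, not the product, of the degrees) are apt and not addressed in the paper.
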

With this notion of degree we define a filtration on~$\Rtdv$.
Since $\cJ$ and $\cM$ are subsets of $\Rtdv$ they immediately
inherit the filtration.
\begin{definition}\label{def:filt}
For $k \in \N_{>0}$ let $\cR_k$ be the set
$\{f \in \Rtdv \;|\; \degree(f) = k \}$.
Then we have $\cR_{k+1} \subset \cR_k$ and $\cR_1 = \Rtdv$,
therefore $\cR_k$ is a filtration of $\Rtdv$.
Similarly $\{\cJ_k\}$ and $\{\cM_k\}$ are filtrations.
\end{definition}

\begin{remark}\label{rem:groebner}
As an analogy of a Gr\"obner basis for polynomial ideals,
see~\cite{bhlv2003}, we could hope that $T_1$ is generated by
$\sL(F_1), \sL(F_2), \sL(F_3), 1, \sL(G_1), \sL(G_2)$ in the
following sense: every $f \in T_1$ can be written as
$\xi_1 \sL(F_1) + \xi_2 \sL(F_2) + \xi_3 \sL(F_3)
 + \eta_0 + \eta_1 \sL(G_1) + \eta_2 \sL(G_2)$,
with $\xi_i \in \Rtdv$ and $\eta_i \in \Rhk$.
\end{remark}

\subsection{Splitting into homogeneous parts}\label{sec:homsplit}
Since the co-dimension of~$\cF$ as a smooth mapping is the same as
the co-dimension of the mapping as a formal power series, we can
simplify the problem by looking at homogeneous functions and add
the co-dimensions found for each degree starting at degree one.
This is carried out in the following chain of assertions.

Let $\cHtdv{k}$ be the set of all homogeneous functions of
degree~$k$ in $I_2$, $I_3$ and $I_4$.
In fact we have $\cHtdv{k} = \cR_k/\cR_{k+1}$.
Furthermore let $\cHhk{k}$ be the set of all homogeneous functions
of degree $k$ in $K$ and $H$, then $\cHhk{m} \subset \cR_{2m}$.
Since $\cHhk{m}$ is not homogeneous in $I$ we use a projection
$\Pi_k : \cR_k \to \cHtdv{k}$ selecting the homogeneous part of a
function $f \in \cR_k$.
The following general result leaves us with a small number of cases.
\begin{proposition}\label{pro:degk}
The co-dimension of $\Pi_k(T_1)$ in $\cHtdv{k}$ is zero for $k = 4$
and $k \geq 6$.
Or, put differently, the mapping (odd degree)
\begin{gather*}
   \cHtdv{2m-1}^3 \times \cHhk{m-1} \to \cHtdv{2m+1}:\\
   (\xi_1, \xi_2, \xi_3, \eta_1) \mapsto \Pi_{2m+1} \big(
   \xi_1\sL(F_1) + \xi_2\sL(F_2) + \xi_3\sL(F_3) + \eta_1\sL(G_1)
   \big)
\end{gather*}
is onto for $m \geq 3$ and also the mapping (even degree)
\begin{gather*}
   \cHtdv{2m-2}^3 \times \cHhk{m} \times \cHhk{m-1} \to \cHtdv{2m}:\\
   (\xi_1, \xi_2, \xi_3, \eta_0, \eta_2) \mapsto \Pi_{2m} \big(
   \xi_1\sL(F_1) + \xi_2\sL(F_2) + \xi_3\sL(F_3) + \eta_0 + \eta_2\sL(G_2)
   \big)
\end{gather*}
is onto for $m \geq 2$.
\end{proposition}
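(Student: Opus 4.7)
The plan is to perform the whole computation modulo the ideal $\mathfrak{I}\subset\Rtdv$ generated by the three leading forms $\sL(F_1),\sL(F_2),\sL(F_3)$. Since $(a_1-a_2)(a_1-a_3)(a_2-a_3)\neq 0$, these leading forms are nonzero scalar multiples of $I_2I_3$, $I_2I_4$, $I_3I_4$, so $\mathfrak{I}=(I_2I_3,\,I_2I_4,\,I_3I_4)$. A monomial $I_2^{p}I_3^{q}I_4^{r}$ lies in $\mathfrak{I}$ iff at least two of $p,q,r$ are positive, so the quotient $\cHtdv{k}/(\mathfrak{I}\cap\cHtdv{k})$ is three-dimensional with basis $\{I_2^k,I_3^k,I_4^k\}$. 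Moreover, by choosing $\xi_i\in\cHtdv{k-2}$ one produces an arbitrary element of $\mathfrak{I}\cap\cHtdv{k}$. Hence the proposition reduces to showing that the $\eta$-contributions surject onto the three-dimensional cokernel.

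Next I compute leading forms modulo $\mathfrak{I}$. From $K\equiv I_2^2+I_3^2+I_4^2$, $H_4\equiv a_1I_2^2+a_2I_3^2+a_3I_4^2$, and the vanishing modulo $\mathfrak{I}$ of any cross product $I_i^{2r}I_j^{2s}$ with $i\ne j$, one gets the key identity
\[
K^{a}H_4^{b}\;\equiv\;a_1^{b}\,I_2^{2(a+b)}+a_2^{b}\,I_3^{2(a+b)}+a_3^{b}\,I_4^{2(a+b)} \pmod{\mathfrak{I}}.
\]
The leading part at $I$-degree $2(a+b)$ of a basis element $K^aH^b\in\cHhk{a+b}$ is $K^aH_4^b$, so this identity records the image of $K^aH^b$ in the three-dimensional cokernel as the vector $(a_1^b,a_2^b,a_3^b)$ placed in the slots $(I_2^{2(a+b)},I_3^{2(a+b)},I_4^{2(a+b)})$.

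In the even case $k=2m$, the contribution of $\eta_0=\sum_{b=0}^{m}c_{m-b,b}\,K^{m-b}H^{b}\in\cHhk{m}$ is the combination $\sum c_{m-b,b}(a_1^b,a_2^b,a_3^b)$. For $m\ge 2$ the rows $b=0,1,2$ are available and form the Vandermonde matrix of $(a_1,a_2,a_3)$, whose determinant $(a_2-a_1)(a_3-a_1)(a_3-a_2)$ is nonzero by hypothesis; hence $\eta_0$ already surjects onto the cokernel and neither $\eta_2\sL(G_2)$ nor further terms are needed. In the odd case $k=2m+1$, multiplying the analogous expression by $\sL(G_1)=b_1I_2^3+b_2I_3^3+b_3I_4^3$ and reducing modulo $\mathfrak{I}$ one more time, the basis element $K^{m-1-b}H^{b}\cdot\sL(G_1)$ contributes $(a_1^b b_1,a_2^b b_2,a_3^b b_3)$. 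For $m\ge 3$ the rows $b=0,1,2$ exist and the determinant is $b_1b_2b_3\,(a_2-a_1)(a_3-a_1)(a_3-a_2)\ne 0$ by hypothesis. Together these cases cover exactly $k=4$ and $k\ge 6$.

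The only genuine obstruction — and the reason $k=5$ is excluded — appears when $m=2$ in the odd case: only the two rows $(b_1,b_2,b_3)$ and $(a_1b_1,a_2b_2,a_3b_3)$ are available, which span only a two-dimensional subspace of the three-dimensional cokernel. This explains why $k=5$ must be handled separately in the sequel. Beyond that, the remaining bookkeeping is light: one only has to verify that the higher-order tails in $H-H_4$, $G_1-\sL(G_1)$, and $G_2-\sL(G_2)$ do not affect the image in $\cHtdv{k}=\cR_k/\cR_{k+1}$, which is immediate from the definition of $\Pi_k$ and the filtration set up in section~\ref{sec:tangentstruc}.
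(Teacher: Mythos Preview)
Your argument is correct and follows the same route as the paper: the ideal $\mathfrak{I}=(I_2I_3,I_2I_4,I_3I_4)$ is precisely the span $B_k^{\flat}$ of Definition~\ref{def:bbb}, your first step is Lemma~\ref{lem:bplat}, and your Vandermonde computations in the three-dimensional cokernel $B_k^{\sharp}$ reproduce Lemmas~\ref{lem:xyzodd} and~\ref{lem:xyzeven}. Your observation that $\eta_0$ alone already surjects in the even case (so $\eta_2\sL(G_2)$ is superfluous for $m\geq 2$) is a mild sharpening, consistent with the matrix~\eqref{matrix:projection} displayed in the paper.
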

Thus we have to investigate degrees $1, 2, 3$ and~$5$ separately.
First we prove proposition~\ref{pro:degk} in three lemmas.
In order to do so it is useful to introduce some notation, which
is motivated by the fact that the projection of
$\xi_1\sL(F_1) + \xi_2\sL(F_2) + \xi_3\sL(F_3)$ on
$\spansel{I_2^k, I_3^k, I_4^k}$ is always zero.
\begin{definition}\label{def:bbb}
The space $\cHtdv{k}$ has a monomial basis denoted by
$b_k = \basis{\ldots, I_2^k, I_3^k, I_4^k}$.
Let $b^{\sharp}$ be the set of monomials $I_2^k$, $I_3^k$ and~$I_4^k$.
Furthermore let $b_k^{\flat}$ be the set of monomials in~$b_k$ with
the monomials in $b^{\sharp}$ \emph{excluded}.
Finally let $B_k^{\sharp}$ be the subspace of $\cHtdv{k}$ spanned
by~$b_k^{\sharp}$, similarly $B_k^{\flat}$ is spanned by~$b_k^{\flat}$.
\end{definition}
The next three lemmas treat different parts of
proposition~\ref{pro:degk}.
The following lemma shows that the mapping from $\cHtdv{k-2}^3$
to~$B_k^{\flat}$ is onto for each $k \geq 2$.
Thus we get rid of the first factor of the mapping in
proposition~\ref{pro:degk}.
Later on we use this lemma again for the remaining low degree cases.
\begin{lemma}\label{lem:bplat}
The mapping
$\cHtdv{k-2}^3 \to B_k^{\flat} : (\xi_1, \xi_2, \xi_3) \mapsto
 \xi_1 \sL(F_1) + \xi_2 \sL(F_2) + \xi_3 \sL(F_3)$
is onto provided that $a_1 - a_2 \neq 0$, $a_2 - a_3 \neq 0$ and
$a_3 - a_1 \neq 0$ and $k \geq 2$.
\end{lemma}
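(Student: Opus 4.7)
The plan is to show surjectivity by exhibiting an explicit preimage for each basis monomial of $B_k^{\flat}$. Recall the leading terms computed earlier:
\begin{equation*}
  \sL(F_1) = (a_1-a_2)\, I_2 I_3, \quad
  \sL(F_2) = (a_1-a_3)\, I_2 I_4, \quad
  \sL(F_3) = (a_2-a_3)\, I_3 I_4.
\end{equation*}
Under the non-degeneracy hypothesis, the three coefficients $a_1-a_2$, $a_1-a_3$, $a_2-a_3$ are all nonzero, so each of these leading terms is a nonzero scalar multiple of one of the products $I_2 I_3$, $I_2 I_4$, $I_3 I_4$. Since the map $(\xi_1,\xi_2,\xi_3)\mapsto \xi_1\sL(F_1)+\xi_2\sL(F_2)+\xi_3\sL(F_3)$ is linear, it suffices to hit every monomial in~$b_k^{\flat}$.

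A monomial $I_2^i I_3^j I_4^l$ with $i+j+l=k$ lies in $b_k^{\flat}$ precisely when at least two of the exponents $i,j,l$ are strictly positive. I split into three (non-exclusive) cases and pick one preimage in each:
\begin{itemize}[topsep=0ex,partopsep=0px,itemsep=0px]
\item If $i\geq 1$ and $j\geq 1$, take $\xi_1 = \tfrac{1}{a_1-a_2}\, I_2^{i-1} I_3^{j-1} I_4^l$ and $\xi_2=\xi_3=0$.
\item Else if $i\geq 1$ and $l\geq 1$ (so $j=0$), take $\xi_2 = \tfrac{1}{a_1-a_3}\, I_2^{i-1} I_3^{j} I_4^{l-1}$ and $\xi_1=\xi_3=0$.
\item Else $j\geq 1$ and $l\geq 1$ (so $i=0$), take $\xi_3 = \tfrac{1}{a_2-a_3}\, I_2^{i} I_3^{j-1} I_4^{l-1}$ and $\xi_1=\xi_2=0$.
\end{itemize}
In every case the chosen $\xi_r$ is a monomial of degree $k-2$ in $I_2,I_3,I_4$, hence an element of $\cHtdv{k-2}$, and a direct multiplication shows $\xi_1 \sL(F_1)+\xi_2\sL(F_2)+\xi_3\sL(F_3)$ equals the target monomial $I_2^i I_3^j I_4^l$. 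This uses the hypothesis $k\geq 2$ since we need $k-2\geq 0$ for the preimage to live in a nontrivial space of homogeneous polynomials (for $k=2$ the $\xi_r$ are simply constants).

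There is no real obstacle here: the argument is a one-line observation that the three ``edge'' monomials $I_2 I_3$, $I_2 I_4$, $I_3 I_4$ generate, as a module over polynomials of degree $k-2$, exactly the monomials of degree $k$ that are not pure powers, and the non-degeneracy condition only enters to guarantee that the three leading coefficients are invertible so the scalings $\tfrac{1}{a_i-a_j}$ make sense. Surjectivity onto $B_k^{\flat}$ follows immediately by linear extension.
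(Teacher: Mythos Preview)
Your proof is correct and follows essentially the same approach as the paper: you observe that every monomial in $B_k^{\flat}$ factors as a monomial of degree $k-2$ times one of $I_2 I_3$, $I_2 I_4$, $I_3 I_4$, and then use the non-degeneracy hypothesis to invert the coefficients $a_i - a_j$. The paper states this in one line without writing out the explicit preimages, but the content is identical.
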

\begin{proof}
Every monomial in $B_k^{\flat}$ can be written as either
$I^l I_2 I_3$, $I^l I_3 I_4$ or $I^l I_2 I_4$ for some
multi-index~$l$ with $|l| = k-2$.
Therefore every $f \in B_k^{\flat}$ can be expressed as
$\xi_1 \sL(F_1) + \xi_2 \sL(F_2) + \xi_3 \sL(F_3)$ for some
$\xi_i \in \cHtdv{k-2}$, but only if $a_1 - a_2 \neq 0$,
$a_2 - a_3 \neq 0$ and $a_3 - a_1 \neq 0$.
If for example $a_1 - a_2 = 0$, then $I_2 I_3 \not \in T_1$.
\end{proof}
The following two lemmas show that the second factor of the mapping
in proposition \ref{pro:degk} maps onto $B^{\sharp}$, but we have to
distinguish the odd and even degree cases.
\begin{lemma}[Odd degree]\label{lem:xyzodd}
The mapping
$\cHhk{m-1} \to \cHtdv{2m+1} : \eta_1 \mapsto
 \Pi_{2m+1} \big( \eta_1 \sL(G_1) \big)$
followed by projection on $B_{2m+1}^{\sharp}$ is onto provided that
 $a_1 - a_2 \neq 0$, $a_2 - a_3 \neq 0$, $a_3 - a_1 \neq 0$ and
$b_1 b_2 b_3 \neq 0$ and $m \geq 3$.
\end{lemma}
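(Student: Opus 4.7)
The plan is to reduce the statement to an elementary Vandermonde computation on $\R^3$, using the three pure-power monomials $I_2^{2m+1}$, $I_3^{2m+1}$, $I_4^{2m+1}$ that form a basis of $B_{2m+1}^{\sharp}$.

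First I would observe that $\sL(G_1) = H_6 = b_1 I_2^3 + b_2 I_3^3 + b_3 I_4^3$, since $G_1$ is already homogeneous of $I$-degree $3$. A convenient basis of $\cHhk{m-1}$ is $\{K^i H^j : i+j = m-1\}$, and because $K$ is homogeneous of $I$-degree~$2$ while $\sL(H) = H_4 = a_1 I_2^2 + a_2 I_3^2 + a_3 I_4^2$, extracting the leading $I$-part sends $K^i H^j$ to $K^i H_4^j$. Thus multiplication by $\sL(G_1)$ followed by $\Pi_{2m+1}$ replaces $\eta_1 = \sum c_{ij} K^i H^j$ by the homogeneous polynomial $\sum c_{ij}\, K^i H_4^j \cdot \sL(G_1)$, and the problem becomes: which elements of $B_{2m+1}^{\sharp}$ lie in the span of these products as $i+j = m-1$ varies?

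Next I would project each such product onto $B_{2m+1}^{\sharp}$. For a fixed $s \in \{2,3,4\}$, the only monomial in $K^i H_4^j$ that multiplies $b_s I_s^3$ to give a pure power $I_s^{2m+1}$ is $I_s^{2(m-1)}$, whose coefficient in $K^i H_4^j = (I_2^2 + I_3^2 + I_4^2)^i (a_1 I_2^2 + a_2 I_3^2 + a_3 I_4^2)^j$ equals $1^i \cdot a_s^j = a_s^j$. Hence the $B_{2m+1}^{\sharp}$-component of $K^i H_4^j \cdot \sL(G_1)$ is
\begin{equation*}
   b_1 a_1^j\, I_2^{2m+1} + b_2 a_2^j\, I_3^{2m+1} + b_3 a_3^j\, I_4^{2m+1},
\end{equation*}
i.e.\ the vector $(b_1 a_1^j,\, b_2 a_2^j,\, b_3 a_3^j)$ in the basis $\{I_2^{2m+1}, I_3^{2m+1}, I_4^{2m+1}\}$. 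Notice that this vector depends only on $j$, not on $i$, so we need only check that by varying $j$ we span $\R^3$.

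Since $m \geq 3$, the values $j = 0, 1, 2$ are all available (each paired with $i = m-1-j \geq 0$). The resulting $3 \times 3$ matrix is the Vandermonde matrix of $(a_1, a_2, a_3)$ with columns rescaled by $b_1, b_2, b_3$, so its determinant equals $b_1 b_2 b_3 (a_1 - a_2)(a_2 - a_3)(a_3 - a_1)$ up to sign, which is nonzero by hypothesis. The three image vectors therefore span $\R^3$, and the composed map onto $B_{2m+1}^{\sharp}$ is surjective. The only delicate step is identifying correctly which contributions of $K^i H_4^j$ survive projection onto pure powers of a single $I_s$; once this bookkeeping is in place, the Vandermonde argument finishes the proof in one line, and moreover shows the hypothesis $m \geq 3$ is sharp (we need three distinct exponents $j$).
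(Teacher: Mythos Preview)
Your proof is correct and follows essentially the same route as the paper: compute the projection of $K^{m-1-j}H^j\,\sL(G_1)$ onto $B_{2m+1}^{\sharp}$, obtain the vectors $(b_1 a_1^j, b_2 a_2^j, b_3 a_3^j)$, and invoke the Vandermonde determinant. The paper states the resulting $3\times m$ matrix directly without your intermediate justification; one cosmetic slip in your write-up is that the $b_i$ scale the \emph{rows}, not the columns, of the Vandermonde matrix.
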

\begin{proof}
The projection of the functions
$K^{m-1}\sL(G_1), K^{m-2}H\sL(G_1), \ldots, H^{m-1}\sL(G_1)$
on $B_{2m+1}^{\sharp}$ is given by the vectors in the matrix
\begin{equation*}
\begin{pmatrix}
   b_1 & a_1 b_1 & a_1^2 b_1 & & a_1^{m-1} b_1 \\
   b_2 & a_2 b_2 & a_2^2 b_2 & \ldots & a_2^{m-1} b_2 \\
   b_3 & a_3 b_3 & a_3^2 b_3 & & a_3^{m-1} b_3
\end{pmatrix}
\end{equation*}
which has rank three as soon as the conditions are met.
\end{proof}
Finally we state and prove a lemma for the even degree case.
\begin{lemma}[Even degree]\label{lem:xyzeven}
The mapping
$\cHhk{m} \times \cHhk{m-1} \to \cHtdv{2m} : (\eta_0, \eta_2)
 \mapsto \Pi_{2m}\big( \eta_0 + \eta_2 \sL(G_2) \big)$
followed by projection on $B_{2m}^{\sharp}$ is onto provided that
$a_1 - a_2 \neq 0$, $a_2 - a_3 \neq 0$ and $a_3 - a_1 \neq 0$ and
$m \geq 2$.
\end{lemma}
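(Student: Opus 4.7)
The plan is to show the stronger statement that the $\eta_0$ summand alone -- setting $\eta_2 = 0$ -- already surjects onto $B_{2m}^\sharp$ when $m \ge 2$; the $\eta_2 \sL(G_2)$ contribution is not needed, and accordingly no $b_i$-nondegeneracy has to be invoked, which matches the hypotheses of the lemma. The argument runs exactly parallel to the odd-degree lemma~\ref{lem:xyzodd}, only shorter.

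First I would use the monomial basis $\{K^{m-j} H^j : j = 0, \ldots, m\}$ of $\cHhk{m}$. Since $K = I_2^2 + I_3^2 + I_4^2$ is already homogeneous of degree $2$ in~$I$, and after the preliminary transformations of section~\ref{sec:prels} we have $\sL(H) = a_1 I_2^2 + a_2 I_3^2 + a_3 I_4^2$, all higher-order corrections in~$H$ contribute only to terms of degree strictly greater than $2m$ in~$I$. Consequently
\[
   \Pi_{2m}\bigl(K^{m-j} H^j\bigr) \;=\; K^{m-j} \sL(H)^j .
\]
Expanding the right-hand side, the only way to obtain the monomial $I_i^{2m}$ for a fixed $i \in \{1,2,3\}$ is to pick $I_i^{2(m-j)}$ from $K^{m-j}$ and $(a_i I_i^2)^j$ from $\sL(H)^j$; the resulting coefficient is $a_i^j$.

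Second, in the basis $\{I_2^{2m}, I_3^{2m}, I_4^{2m}\}$ of $B_{2m}^\sharp$, the composition of $\eta_0 \mapsto \Pi_{2m}(\eta_0)$ with the projection onto $B_{2m}^\sharp$ is therefore represented by the $3 \times (m+1)$ matrix $A = (a_i^j)$ with $i \in \{1,2,3\}$ and $j \in \{0, \ldots, m\}$. Because $m \ge 2$ this matrix has at least three columns, and its leftmost $3 \times 3$ submatrix is the classical Vandermonde matrix with determinant $(a_2 - a_1)(a_3 - a_1)(a_3 - a_2)$. By the hypothesis $(a_1 - a_2)(a_2 - a_3)(a_3 - a_1) \neq 0$ this is nonzero, so $A$ has rank $3$ and the mapping is surjective onto $B_{2m}^\sharp$. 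There is essentially no obstacle; the whole lemma reduces to recognising a Vandermonde determinant, and one does not even have to use the generators $\sL(G_2)$ to close the argument.
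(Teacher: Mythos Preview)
Your proof is correct and follows essentially the same route as the paper: both compute the projection of the monomial basis of $\cHhk{m}$ onto $B_{2m}^\sharp$, obtain the $3 \times (m+1)$ matrix $(a_i^j)$, and invoke the Vandermonde determinant of its first three columns. Your presentation is in fact slightly cleaner than the paper's, since you make explicit that the $\eta_0$ contribution alone already surjects onto $B_{2m}^\sharp$ --- the paper lists the $\eta_2 \sL(G_2)$ terms as well but the displayed matrix is precisely the one you write down, so the extra generators play no role in the rank argument.
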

\begin{proof}
The projection of the functions
\begin{equation*}
   K^m, K^{m-1} H, \ldots, H^m, K^{m-1} \sL(G_2), K^{m-2} H \sL(G_2),
   \ldots, H^{m-1} \sL(G_2)
\end{equation*}
on~$B_{2m}^{\sharp}$ is given by the vectors in the matrix
\begin{equation}
\label{matrix:projection}
\begin{pmatrix}
   1 & a_1 & a_1^2 & & a_1^m \\
   1 & a_2 & a_2^2 & \ldots & a_2^m \\
   1 & a_3 & a_3^2 & & a_3^m
\end{pmatrix}
\end{equation}
which has rank three as soon as the conditions are met.
\end{proof}
With these three lemmas we prove proposition \ref{pro:degk}.
\begin{proof}[Proof of proposition \ref{pro:degk}]
The odd degree part of the proposition is covered by combining lemmas
\ref{lem:bplat} and~\ref{lem:xyzodd} showing that the product mapping
is onto~$\cHtdv{2m+1}$.
Similarly combining lemmas \ref{lem:bplat} and~\ref{lem:xyzeven} shows
that in case of even degree the product mapping is onto~$\cHtdv{2m}$.
\end{proof}
Finally we consider the remaining cases: degrees $1, 2, 3$ and~$5$.
In all cases we follow the same pattern, we determine the
co-dimension of
$\Pi_k \big( \xi_1 F_1 + \xi_2 F_2 + \xi_3 F_3
 + \eta_0 + \eta_1 G_1 + \eta_2 G_2 \big)$
in $\cHtdv{k}$ for $k \in \{1,2,3,5\}$.
But in view of lemma~\ref{lem:bplat} we only have to consider the
projection on~$B_k^{\sharp}$.
The main result of this part is the next proposition.
\begin{proposition}\label{pro:degonetofive}
A complement of $T_1$ in $\cR$ is spanned by the functions
$\spansel{I_2, I_3, I_4, I_2^3, I_3^3}$ or
$\spansel{I_2, I_3, I_4, I_2^3, I_4^3}$ or
$\spansel{I_2, I_3, I_4, I_3^3, I_4^3}$ as a linear space.
\end{proposition}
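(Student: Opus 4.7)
The plan is to dispatch the residual degrees $k \in \{1,2,3,5\}$ one by one; by Proposition~\ref{pro:degk} these are the only cases left. For each such $k$, Lemma~\ref{lem:bplat} already handles the $B_k^{\flat}$ portion under the hypothesis $(a_1-a_2)(a_2-a_3)(a_3-a_1) \neq 0$, so the task reduces to identifying the image of $\Pi_k(T_1)$ inside $B_k^{\sharp} = \spansel{I_2^k,I_3^k,I_4^k}$. My first observation is that the ideal part $\xi_1 F_1 + \xi_2 F_2 + \xi_3 F_3$ makes no contribution to $B_k^{\sharp}$ at any $k$: every monomial of $F_i = U_i(H)$ contains at least two distinct $I_j$'s, and this property is preserved by multiplication by any $\xi_i$. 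Consequently only the module part $\eta_0 + \eta_1 G_1 + \eta_2 G_2$ can contribute, and each such contribution becomes a vector in $\R^3$ in the basis $(I_2^k, I_3^k, I_4^k)$.

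The cases $k = 1$ and $k = 5$ are handled directly. For $k = 1$ the minimum $I$-degree of anything in $T_1$ is $2$, so the complement is all of $\spansel{I_2, I_3, I_4}$. For $k = 5$ (which escapes Lemma~\ref{lem:xyzodd}) I exhibit three independent contributions that span $B_5^{\sharp}$: the vector $(b_1, b_2, b_3)$ from $\eta_0 = c_{11} H K$ via $H_6 K$; the vector $(a_1 b_1, a_2 b_2, a_3 b_3)$ from $\eta_0 = c_{20} H^2$ via the cross term $2 H_4 H_6$; and the vector $(a_1^2 b_1, a_2^2 b_2, a_3^2 b_3)$ from $\eta_2 G_2$ with $\eta_2$ proportional to $H$, combining the degree-$3$ part $H_6$ of $\eta_2$ with the leading degree-$2$ part of $G_2$. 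The resulting $3 \times 3$ matrix factors as $\mathrm{diag}(b_1, b_2, b_3)$ times the Vandermonde in $(a_1, a_2, a_3)$, so the full non-degeneracy $(a_1-a_2)(a_2-a_3)(a_3-a_1) b_1 b_2 b_3 \neq 0$ is exactly what forces rank $3$, and hence a zero complement at $k = 5$.

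The degrees $k = 2$ and $k = 3$ must be analyzed jointly, because the tangent direction $G_2$ itself projects nontrivially to both $B_2^{\sharp}$ (leading coefficients $(a_1^2, a_2^2, a_3^2)$) and to $B_3^{\sharp}$ (degree-$3$ part $\tfrac{5}{2}(a_1 b_1, a_2 b_2, a_3 b_3)$). Together with $\eta_0 = H$ giving $((a_i), (b_i))$, $\eta_0 = K$ giving $((1,1,1), (0,0,0))$, and $\eta_1 G_1 = G_1$ giving $((0,0,0),(b_i))$, one obtains four tangent vectors in $B_2^{\sharp} \oplus B_3^{\sharp} \cong \R^6$; a short Vandermonde check under the non-degeneracy hypothesis shows they are linearly independent, so the joint complement is $6 - 4 = 2$-dimensional. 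Every element of $B_2^{\sharp}$ turns out to be equivalent modulo $T_1$ to an element of $B_3^{\sharp}$, and a direct verification shows that any two of $I_2^3, I_3^3, I_4^3$ form a basis for the complement provided $b_1 b_2 b_3 \neq 0$. Summing the complement dimensions yields $3 + 2 = 5$. The main obstacle is recognizing this $G_2$-coupling between degrees $2$ and $3$: a naive degree-by-degree analysis would distribute the two complement dimensions across both degrees, but the mixing forces both into degree $3$, giving the form stated in the proposition.
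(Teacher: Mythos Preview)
Your degree-$5$ argument has a genuine gap. The elements $HK$, $H^2$, and $HG_2$ that you invoke all have leading $I$-degree~$4$, so they lie in $\cR_4 \setminus \cR_5$. The filtration method requires, for each $k$, the graded piece $\text{gr}_k T_1 = (T_1 \cap \cR_k)/(T_1 \cap \cR_{k+1})$; to exhibit something in $\text{gr}_5 T_1$ you need an element of $T_1$ whose degree-$4$ part \emph{vanishes}. Simply reading off the degree-$5$ components of $HK$, $H^2$, $HG_2$ gives vectors in $\Pi_5(T_1)$, which is strictly larger than $\text{gr}_5 T_1$. Put differently: Proposition~\ref{pro:degk} at $m=2$ already uses $K^2, KH, H^2, KG_2, HG_2$ to span $B_4^\sharp$; once those degrees of freedom are spent at degree~$4$, you cannot independently claim their degree-$5$ residues. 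What remains at degree~$5$ is only the $2$-dimensional kernel of the degree-$4$ map, and you have not computed what that kernel contributes.

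The paper handles this correctly by a different choice of generators. Two of the three needed directions come from $KG_1$ and $HG_1$, which genuinely have leading degree~$5$ and project to $(b_i)$ and $(a_i b_i)$ on $B_5^\sharp$. For the third direction the paper explicitly constructs an element
\[
   F_5 \;=\; \xi_1 I_2 I_3 F_1 + \xi_2 I_2 I_4 F_2 + \xi_3 I_3 I_4 F_3
   + \eta_{01} K^2 + \eta_{02} KH + \eta_{03} H^2 + \eta_{21} KG_2 + \eta_{22} HG_2
\]
with scalar coefficients chosen so that $\Pi_4(F_5)=0$ while $\Pi_5(F_5)\neq 0$; this is exactly the step your argument skips. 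The resulting $3\times 3$ matrix on $B_5^\sharp$ then has determinant $(a_1-a_2)(a_2-a_3)(a_3-a_1)b_1b_2b_3$.

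A secondary remark: your insistence that degrees $2$ and $3$ ``must be analyzed jointly'' because $G_2$ has a nonzero degree-$3$ part is a misreading of the filtration method. Since $G_2\in\cR_2\setminus\cR_3$, its degree-$3$ part plays no role in $\text{gr}_3 T_1$; the paper treats degrees~$2$ and~$3$ separately (codimensions $0$ and $2$) without difficulty. Your joint computation happens to give the correct total (because $K,H,G_1,G_2$ all lie purely in $B^\sharp$), but it is not forced, and the same inconsistency---treating $5$ alone while borrowing degree-$4$ elements---is what breaks your degree-$5$ step.
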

We prove this proposition in several lemmas.
The following lemma is immediately clear.
\begin{lemma}[Degree one]\label{lem:degone}
A monomial basis of functions of degree one is
$\basis{I_2, I_3, I_4}$.
Since $T_1$ does not contain functions of degree one, the
co-dimension in this space is three and a complement is
$B_1^{\sharp}$ itself.
\end{lemma}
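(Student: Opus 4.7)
The plan is to show that every element of $T_1$ has degree at least two in $(I_2,I_3,I_4)$, so that the projection $\Pi_1(T_1)$ vanishes. Once this is established, the full space $\cHtdv{1}$ is a complement, and since $\cHtdv{1} = \spansel{I_2,I_3,I_4} = B_1^{\sharp}$ is three-dimensional, the co-dimension is exactly three.

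To carry this out, I would first recall from section~\ref{sec:tangentstruc} that every $f \in T_1$ can be written as
\begin{equation*}
   f = \xi_1 F_1 + \xi_2 F_2 + \xi_3 F_3 + \eta_0 + \eta_1 G_1 + \eta_2 G_2,
\end{equation*}
with $\xi_i \in \Rtdv$ and $\eta_i \in \Rhk$. I would then tabulate the degrees of the generators. From section~\ref{sec:prels} the leading terms computed in section~\ref{sec:tangentstruc} give $\degree(F_1) = \degree(F_2) = \degree(F_3) = 2$ and $\degree(G_2) = 2$, while $\degree(G_1) = 3$ because $G_1 = H_6$ is homogeneous of degree three in $I$. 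Since $\xi_i \in \Rtdv$ has no constant term, $\degree(\xi_i) \geq 1$, so every term $\xi_i F_i$ has degree at least $3$.

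Next I would bound the degree of the $\Rhk$-terms. Any $\eta \in \Rhk$ is a formal series without constant term in $H$ and $K$, both of which vanish to order two in $I$ (indeed $K = I_2^2+I_3^2+I_4^2$ and after subtracting $H_2$ we have $H = H_4 + H_6$). Hence $\degree(\eta_0) \geq 2$, and the mixed terms satisfy $\degree(\eta_1 G_1) \geq 2+3 = 5$ and $\degree(\eta_2 G_2) \geq 2+2 = 4$. Taking the minimum over all six contributions gives $\degree(f) \geq 2$, so $\Pi_1(f) = 0$ for every $f \in T_1$.

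I do not foresee a real obstacle: the only thing to watch is the bookkeeping that $K$ and $H$ (after the preliminary removal of $H_2$ in section~\ref{sec:prels}) both have order $\geq 2$ in $I$, which is exactly what guarantees $\degree(\eta_0) \geq 2$ and hence kills any potential degree-one contribution from the $\cM$-factor. With this in hand, $\Pi_1(T_1) = \{0\}$, and since $\cHtdv{1} = B_1^{\sharp} = \spansel{I_2,I_3,I_4}$ has dimension three, this space is itself a complement of co-dimension three.
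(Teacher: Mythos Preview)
Your argument is correct and is exactly the elaboration the paper omits: the authors simply declare the lemma ``immediately clear'' and move on, so your degree-counting is precisely the implicit reasoning they have in mind.

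One small bookkeeping remark: the paper's later degree-by-degree analysis (e.g.\ lemma~\ref{lem:bplat} with $k=2$) effectively allows the $\xi_i$ to be constants, so strictly speaking $\xi_i F_i$ may have degree~$2$ rather than~$3$. This does not affect your conclusion, since $\degree(F_i)=2$ already suffices for $\Pi_1(\xi_i F_i)=0$; only the intermediate bound is sharper than needed.
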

Thus we get unfolding terms: $\mu_1 I_2$, $\mu_2 I_3$
and~$\mu_3 I_4$.
\begin{lemma}[Degree two]\label{lem:degtwo}
Functions of degree two with a nonzero projection on $B_2^{\sharp}$
are $K$, $H$ and~$G_2$.
These three functions are independent a soon as
$(a_1 - a_2)(a_2 - a_3)(a_3 - a_1) \neq 0$.
\end{lemma}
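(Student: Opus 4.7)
The plan is to inspect each summand of a generic element $f = \xi_1 F_1 + \xi_2 F_2 + \xi_3 F_3 + \eta_0 + \eta_1 G_1 + \eta_2 G_2 \in T_1$, extract its degree-two part in $I = (I_2, I_3, I_4)$, and then project onto $B_2^{\sharp} = \spansel{I_2^2, I_3^2, I_4^2}$. The ideal contributions $\xi_i F_i$ can be discarded immediately, since each $\sL(F_i)$ is a pure cross monomial lying in $B_2^{\flat}$; likewise $\eta_1 G_1$ starts at degree three, because $G_1 = H_6$ is homogeneous of degree three in $I$ and so cannot contribute at degree two.

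What remains is the module piece $\eta_0 + \eta_2 G_2$. The key observation is that the lowest-degree part of $\Rhk$ is $\spansel{K, H}$, each of degree two in $I$, so the degree-two component of $\eta_0$ is a combination $c_K K + c_H H_4$; and by the preliminary transformations of section~\ref{sec:prels}, $H_4 = a_1 I_2^2 + a_2 I_3^2 + a_3 I_4^2$ is already diagonal. The degree-two part of $\eta_2 G_2$ can only come from multiplying $G_2$ by the constant part of $\eta_2$, producing a scalar multiple of $\sL(G_2) = a_1^2 I_2^2 + a_2^2 I_3^2 + a_3^2 I_4^2$. Each of $K$, $H_4$, and $\sL(G_2)$ already sits in $B_2^{\sharp}$, which proves the first assertion.

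For linear independence, I would read these three functions in the basis $\basis{I_2^2, I_3^2, I_4^2}$, where they appear as the row vectors $(1,1,1)$, $(a_1,a_2,a_3)$, $(a_1^2,a_2^2,a_3^2)$ of a $3\times 3$ Vandermonde matrix whose determinant equals $\pm(a_1-a_2)(a_2-a_3)(a_3-a_1)$. They are therefore linearly independent precisely when this non-degeneracy condition holds. The only substantive step is this Vandermonde calculation; the rest is bookkeeping of the lowest-degree pieces of the module generators.
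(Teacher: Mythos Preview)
Your proof is correct and follows essentially the same approach as the paper: both reduce to the Vandermonde matrix in $(a_1,a_2,a_3)$ obtained by projecting $K$, $H_4$, and $\sL(G_2)$ onto $B_2^{\sharp}=\spansel{I_2^2,I_3^2,I_4^2}$. You are in fact more thorough than the paper in explaining why the ideal terms $\xi_i F_i$ and the module term $\eta_1 G_1$ cannot contribute at degree two, which the paper leaves implicit.
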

\begin{proof}
The projection of $K$, $H$ and $G_2$ onto $B_2^{\sharp}$ is given by
the matrix
\begin{equation*}
   A_2^{\sharp} =
   \begin{pmatrix}
      1 & a_1 & a_1^2 \\
      1 & a_2 & a_2^2 \\
      1 & a_3 & a_3^2
   \end{pmatrix}
   \enspace ,
\end{equation*}
cf.~\eqref{matrix:projection}.
The determinant of $A_2^{\sharp}$ is
$(a_1 - a_2)(a_2 - a_3)(a_3 - a_1)$.
\end{proof}
\begin{lemma}[Degree three]\label{lem:degthree}
There is only one function in $T_1$ with a nonzero projection
on~$B_3^{\sharp}$, nam\-e\-ly~$G_1$.
Thus the co-dimension of $T_1$ in the space of homogeneous functions
of degree three is two.
As a complement any pair of $I_2^3$, $I_3^3$ and $I_4^3$ will do.
We take for example $\mu_4 I_2^3$ and $\mu_5 I_3^3$ as unfolding
terms, then we must impose the condition $b_3 \neq 0$.
\end{lemma}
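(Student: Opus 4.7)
The plan is to show that the only generator of $T_1$ that can contribute anything to the "diagonal" subspace $B_3^\sharp$ is $\sL(G_1)$, and then to read off both the co-dimension and the non-degeneracy condition directly from the coefficients of $\sL(G_1) = H_6$. Lemma \ref{lem:bplat} already reduces the question to the projection on $B_3^\sharp$, so only that projection needs to be tracked.

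First I would go through the six generators $F_1, F_2, F_3, 1, G_1, G_2$ one by one, at degree three. Each $\sL(F_i)$ is of degree two with leading monomial $I_j I_k$ ($j \ne k$), so for $\xi_i \in \cHtdv{1}$ the product $\xi_i \sL(F_i)$ lies entirely in $B_3^\flat$ and projects to zero on $B_3^\sharp$. The coefficients $\eta_0, \eta_1, \eta_2$ live in $\Rhk$, and since both $H$ and $K$ have degree two in $I$, every element of $\Rhk$ has only even degree in $I$. Consequently $\eta_0$ cannot contribute at degree three, and $\eta_2 \sL(G_2)$ cannot either, because $\sL(G_2)$ already has degree two and $\eta_2$ would have to supply an odd degree.

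The only remaining possibility is $\eta_1 \sL(G_1)$. Since $\sL(G_1) = H_6 = b_1 I_2^3 + b_2 I_3^3 + b_3 I_4^3$ already has degree three, $\eta_1$ can only contribute through its degree-zero part; hence the image at degree three is the one-dimensional line $\spansel{\,b_1 I_2^3 + b_2 I_3^3 + b_3 I_4^3\,}$ inside $B_3^\sharp$. Combined with lemma \ref{lem:bplat}, this yields co-dimension two in $\cHtdv{3}$.

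For the complement one chooses any two of $\{I_2^3, I_3^3, I_4^3\}$ whose span is transverse to that line; the pair $\{I_2^3, I_3^3\}$ works precisely when the $I_4^3$-coefficient $b_3$ is nonzero, giving the stated non-degeneracy condition (with $b_1 \ne 0$ or $b_2 \ne 0$ arising symmetrically for the other two choices). I do not foresee a genuine obstacle: the whole argument is a parity-and-degree bookkeeping of $\Rhk$ against the generators, and the non-degeneracy $b_3 \ne 0$ emerges automatically as the transversality condition on a single line in a three-dimensional space.
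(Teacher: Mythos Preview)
Your argument is correct and follows the same route as the paper: isolate the projection of $T_1$ onto $B_3^{\sharp}$ as the single line spanned by $\sL(G_1)=H_6$, then verify that the chosen pair of cubes is transverse to it. The paper's own proof is far terser---it simply writes down the $3\times 3$ matrix of $(G_1,\mu_4 I_2^3,\mu_5 I_3^3)$ on $B_3^{\sharp}$, whose determinant is $b_3\mu_4\mu_5$---so your parity-and-degree bookkeeping actually supplies the justification the paper leaves implicit.
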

\begin{proof}
The projection of $G_1$, $\mu_4 I_2^3$ and $\mu_5 I_3^3$
on~$B_3^{\sharp}$ is given by the matrix
\begin{equation*}
   A_3^{\sharp} =
   \begin{pmatrix}
      b_1 & \mu_4 & 0 \\
      b_2 & 0 & \mu_5 \\
      b_3 & 0 & 0
   \end{pmatrix}
\end{equation*}
\end{proof}
\begin{lemma}[Degree five]\label{lem:degfive}
There are only two functions of degree five in~$T_1$, namely $KG_1$
and~$HG_1$, with a nonzero projection on~$B_5^{\sharp}$.
However, a function $F_5 \in T_1$ exists such that
$\Pi_k\big(F_5\big) = 0$ for $k \leq 4$ and $\Pi_5\big(F_5\big) \neq 0$.
With $F_5$ the co-dimension of~$T_1$ in the space of homogeneous
functions of degree five is zero, provided that
$(a_1 - a_2)(a_2 - a_3)(a_3 - a_1) b_1 b_2 b_3 \neq 0$.
\end{lemma}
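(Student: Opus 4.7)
The plan is to follow the same pattern as Lemmas~\ref{lem:degone}--\ref{lem:degthree}: enumerate the sources in $T_1$ whose projection to $B_5^{\sharp}$ is nonzero, collect these into a $3\times 3$ matrix of $B_5^{\sharp}$-coefficients, and check that its rank is three. What makes degree~$5$ subtle is that the two obvious generators $KG_1$ and $HG_1$ contribute only two such vectors, namely $(b_j)$ and $(a_jb_j)$; a third independent vector must come from an $F_5\in T_1$ whose leading degree is~$4$, not~$5$.

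The first step is to eliminate the ideal $\cJ$ from consideration. Writing $F_i=\sL(F_i)+U_i(H_6)$, both summands carry a visible off-diagonal factor $I_aI_b$ with $a\neq b$, so every degree-$5$ piece of $\xi_iF_i$ lies in $B_5^{\flat}$. Thus all $B_5^{\sharp}$-contributions originate from the module $\cM=\eta_0+\eta_1G_1+\eta_2G_2$. The next step is an explicit computation of the diagonal degree-$5$ parts of each summand, using $G_1=\tfrac12 H_6$ and the expansion of $G_2=V_2(H)$ into its degree-$2$, degree-$3$ and degree-$4$ pieces. A direct calculation shows that $\eta_0$ (through its $KH$ and $H^2$ coefficients) and $\eta_1G_1$ (through the linear-in-$K,H$ part of $\eta_1$) always produce combinations of $(b_j)$ and $(a_jb_j)$ only, whereas $\eta_2G_2$ with $\eta_2=\nu H$ yields the diagonal contribution $\tfrac72\nu(a_j^2b_j)$, coming from $H_4\cdot\tfrac52\sum_j a_jb_jI_j^3$ and $H_6\cdot\sL(G_2)$ combined. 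The matrix of available directions in $B_5^{\sharp}$ is therefore the Vandermonde-like
\begin{equation*}
\begin{pmatrix} b_1 & b_2 & b_3 \\ a_1b_1 & a_2b_2 & a_3b_3 \\ a_1^2b_1 & a_2^2b_2 & a_3^2b_3 \end{pmatrix}
= \begin{pmatrix} 1 & 1 & 1 \\ a_1 & a_2 & a_3 \\ a_1^2 & a_2^2 & a_3^2 \end{pmatrix}\mathrm{diag}(b_1,b_2,b_3),
\end{equation*}
whose determinant equals $(a_1-a_2)(a_2-a_3)(a_3-a_1)\,b_1b_2b_3\neq 0$ under the non-degeneracy hypothesis.

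To exhibit a genuine $F_5\in T_1$ with $\Pi_k(F_5)=0$ for $k\le 4$ and a nonzero $(a_j^2b_j)$-component in $\Pi_5$, I would start from $\nu HG_2$ and invoke Proposition~\ref{pro:degk} at $k=4$ to cancel its degree-$4$ leading term by adding corrections in $T_1$. The available corrections are $\xi_iF_i$ with $\xi_i$ of degree~$2$, $\eta_0\in\cHhk{2}$, and $\eta_2G_2$ with $\eta_2\in\spansel{K}$; by the same case analysis as above, each contributes to $B_5^{\sharp}$ only through $(b_j)$ and $(a_jb_j)$, never through $(a_j^2b_j)$. So after cancellation the $(a_j^2b_j)$-component of $F_5$ survives, supplying the sought third independent direction. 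Combined with $KG_1,HG_1$ spanning the other two directions of $B_5^{\sharp}$ and with Lemma~\ref{lem:bplat} at $k=5$ covering $B_5^{\flat}$, the codimension of $T_1$ in $\cHtdv{5}$ is zero.

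The main obstacle I expect is exactly the bookkeeping in the previous paragraph: verifying that none of the correction terms used to kill the degree-$4$ part of $\nu HG_2$ accidentally deposits an $(a_j^2b_j)$-contribution at degree~$5$. This hinges on the specific form of the vector field $V_2$ from Lemma~\ref{lem:vfs}, whose coefficients are affine in $I_j$, endowing $G_2$ with diagonal higher-order terms beyond $\sL(G_2)$; without these the $(a_j^2b_j)$-direction would never appear and the lemma would fail.
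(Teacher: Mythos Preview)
Your approach is correct and is essentially the paper's own argument, only presented from a slightly different angle. The paper writes down the explicit ansatz
\[
F_5=\xi_1 I_2 I_3 F_1+\xi_2 I_2 I_4 F_2+\xi_3 I_3 I_4 F_3+\eta_{01}K^2+\eta_{02}KH+\eta_{03}H^2+\eta_{21}KG_2+\eta_{22}HG_2
\]
with scalar coefficients, solves $\Pi_4(F_5)=0$ directly, and records the resulting third column of $A_5^{\sharp}$ as $\bigl(a_j^2-2\sigma_1 a_j+\sigma_2\bigr)b_j$; this differs from your clean $(a_j^2 b_j)$ only by a column operation against $(b_j)$ and $(a_j b_j)$, so the determinants coincide. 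Your one loose end --- that the degree-$4$ part of $\nu HG_2$ can be killed using corrections with $\eta_2\in\spansel{K}$ only, so that Proposition~\ref{pro:degk} is not literally what you invoke --- is harmless: on $B_4^{\sharp}$ the remaining sources $K^2,KH,H^2,K\sL(G_2)$ already produce $(1),(a_j),(a_j^2)$ and hence span, while the off-diagonal part of $H_4\sL(G_2)$ involves only $I_a^2 I_b^2$ monomials and is absorbed by the $\xi_i I_a I_b F_i$ terms, exactly as in the paper's ansatz.
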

\begin{proof}
Let
\begin{equation*}
   F_5 = \xi_1 I_2 I_3 F_1 + \xi_2 I_2 I_4 F_2
   + \xi_3 I_3 I_4 F_3 + \eta_{01} K^2 + \eta_{02} KH
   + \eta_{03} H^2 + \eta_{21} KG_2 + \eta_{22} HG_2
\end{equation*}
be a function of degree 4, with $\xi_1,\ldots,\eta_{22} \in \R$.
Then a non-trivial solution of $\Pi_4 \big( F_5 \big) = 0$ exists
while $\Pi_5 \big( F_5 \big) \neq 0$.
The projection of the functions $\Pi_5 \big( KG_1 \big)$,
$\Pi_5 \big( HG_1 \big)$ and $\Pi_5 \big( F_5 \big)$
onto~$B_5^{\sharp}$ has the matrix
\begin{equation*}
   A_5^{\sharp} =
   \begin{pmatrix}
      b_1 & a_1 b_1 & -(a_1^2 - a_2 a_3 + a_1 (a_2 + a_3)) b_1\\
      b_2 & a_2 b_2 & -(a_2^2 - a_1 a_3 + a_2 (a_1 + a_3)) b_2\\
      b_3 & a_3 b_3 & -(a_3^2 - a_1 a_2 + a_3 (a_1 + a_2)) b_3\\
   \end{pmatrix}
\end{equation*}
and
$\det(A_5^{\sharp}) = (a_1 - a_2)(a_2 - a_3)(a_3 - a_1) b_1 b_2 b_3$.
\end{proof}
The last lemma is about the modal parameters.
\begin{lemma}\label{lem:modal}
Parameters $\mu_4$ and $\mu_5$ are moduli.
\end{lemma}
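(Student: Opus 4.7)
The plan is to show that the perturbation directions $\partial H/\partial\mu_4 = I_2^3$ and $\partial H/\partial\mu_5 = I_3^3$ remain outside the restricted tangent space $T_1$ of $Orb_{\cF(\cdot;\mu)}$ at $\cF(\cdot;\mu)$ not only at $\mu=0$ but for every $\mu$ in an open neighbourhood of the origin. This persistent transversality is exactly what distinguishes moduli from ordinary unfolding parameters: small variations of $\mu_4$ or $\mu_5$ cannot be absorbed by any one-parameter family of restricted left-right equivalences, so they parametrize genuinely distinct equivalence classes of germs.

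First I would observe that the perturbed Hamiltonian $H(I;\mu)$ differs from $H(I)$ only by the linear terms $\mu_1 I_2 + \mu_2 I_3 + \mu_3 I_4$ and by the substitutions $b_1 \leadsto b_1+\mu_4$, $b_2 \leadsto b_2+\mu_5$ in the cubic part. Consequently the leading-term analysis of sections~\ref{sec:vvs}--\ref{sec:tangentstruc} goes through essentially unchanged: the vector fields $U_i$ and $V_j$ of lemma~\ref{lem:vfs} still solve the defining equations $X(K)=0$, $X(K)=K$ and $X(K)=H(\cdot;\mu)$ (the $V_j$ picking up the perturbed coefficients), and $\sL(G_1)$ merely acquires the shifted cubic coefficients. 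Since the non-degeneracy condition $(a_1-a_2)(a_2-a_3)(a_3-a_1)\,b_1 b_2 b_3 \ne 0$ is open, it persists when $b_1,b_2$ are replaced by $b_1+\mu_4,\ b_2+\mu_5$ for $\mu$ in a sufficiently small neighbourhood of $0$.

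Next I would reapply lemma~\ref{lem:degthree} at the perturbed germ: the projection of $T_1$ onto $B_3^\sharp = \spansel{I_2^3,I_3^3,I_4^3}$ is the one-dimensional line spanned by $\bigl(b_1+\mu_4,\ b_2+\mu_5,\ b_3\bigr)$, leaving a two-dimensional complement inside $B_3^\sharp$. Because $b_3\ne 0$ throughout the neighbourhood, the pair $\{I_2^3,I_3^3\}$ is independent of this line, so neither monomial lies in $\Pi_3(T_1)$, and hence neither lies in $T_1$ itself. Thus $\partial\cF/\partial\mu_4$ and $\partial\cF/\partial\mu_5$ remain transverse to the orbit at every nearby $\mu$, which is the infinitesimal statement of modularity.

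The main obstacle I anticipate is pinning down the precise definition of ``modulus'' that the theorem is invoking, since the notion admits several inequivalent formulations (infinitesimal vs.\ finite, pointwise at $\mu=0$ vs.\ throughout a neighbourhood, up to rescaling of parameters or not). Once the neighbourhood version of infinitesimal transversality is adopted---consistent with the singularity-theoretic framework of sections~\ref{sec:aequiv}--\ref{sec:rtan}---the proof reduces to the direct application of the openness of the non-degeneracy condition and of lemma~\ref{lem:degthree} sketched above.
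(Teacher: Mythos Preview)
Your approach is correct and coincides with the paper's: the paper tersely observes that the unfoldings of $(H(I;0,0,0,0,0),K)$ and $(H(I;0,0,0,\mu_4,\mu_5),K)$ are equal for small $\mu_4,\mu_5$, and you supply the mechanism behind this, namely that the entire codimension computation depends only on the open non-degeneracy condition, which is preserved under the shifts $b_1\mapsto b_1+\mu_4$, $b_2\mapsto b_2+\mu_5$. One small tightening worth making explicit: the paper (and the argument) work along the modular slice $\mu_1=\mu_2=\mu_3=0$ rather than at all nearby $\mu$, since nonzero linear terms would alter the leading-term filtration on which lemmas~\ref{lem:bplat}--\ref{lem:degfive} rest.
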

\begin{proof}
Let $H(I;\mu)$ be as in the main theorem~\ref{the:uniunfo}.
From the previous proofs it follows almost immediately that the
unfoldings of $(H(I;0,0,0,0,0), K)$ and $(H(I;0,0,0,\mu_4,\mu_5), K)$
are equal for small values of $\mu_4$ and~$\mu_5$.
Therefore $\mu_4$ and $\mu_5$ are moduli.
\end{proof}
The proof of theorem \ref{the:uniunfo} follows from proposition
\ref{pro:degk}, proposition \ref{pro:degonetofive} and lemma
\ref{lem:modal}.
\begin{remark}
As a by product we find that $T_1$ is not generated by
\begin{equation*}
   \sL(F_1), \sL(F_2), \sL(F_3), 1, \sL(G_1), \sL(G_2).
\end{equation*}
See remark \ref{rem:groebner}.
\end{remark}
%

\section{Discussion}\label{sec:disco}
The dynamics of an $n$--degree-of-freedom Hamiltonian system locally
around an elliptic equilibrium at the origin is characterised by an
$n$--tuple $\omega \in \R^n$ of frequencies.
When the frequencies satisfy an integer relation
$\inprod{m}{\omega} \neq 0$ with $m \in \Z^n$ we say that the
frequencies are \emph{resonant}.
For most equilibria the frequencies are non-resonant.
However, when the system depends on parameters there are resonances
at a dense subset of parameter values.
Since low order resonances are accompanied by bifurcations the
corresponding points in parameter space are of special interest.

Here we consider two-degree-of-freedom systems.
In that case $\omega = (\omega_1, \omega_2)$, so $\omega$ is resonant
if $\omega_1 / \omega_2$ is an element of~$\Q$.
We may assume without loss of generality that $\omega_1$ and~$\omega_2$
are relative prime integers $k$ and~$l$ at resonance.
The linear part of the vector field is determined by $\omega = (k, l)$
if $k \neq \pm l$.
In linear Hamiltonian systems imaginary eigenvalues, in casu the
frequencies $k, l$ have a sign.
The sign is related to the Morse index of the Hamiltonian.
Therefore a $k{:}l$~resonance is not equivalent to a $k{:}{-}l$~resonance;
in particular the $1{:}1$ and $1{:}{-}1$~resonances are not equivalent.
Moreover, eigenvalues with equal sign are always semi-simple, whereas
the $1{:}{-}1$~resonance can also be nilpotent.
Thus there are three resonances with equal frequencies, namely the
semi-simple $1{:}{-}1$, the nilpotent $1{:}{-}1$ and the $1{:}1$~resonance.
The latter is always semi-simple.
The nilpotent $1{:}{-}1$~resonance is what triggers the Hamiltonian Hopf
bifurcation.

As indicated in the introduction the $k{:}l$~resonances, with
$k, l \in \N$, are very similar.
In particular, in the sense of section~\ref{sec:mapstab} the
co-dimension is~$2$, provided that $k{:}l$ is not equal to $1{:}1$,
$1{:}2$ or~$1{:}3$.
The last two exceptional cases have co-dimension $1$ and~$3$, respectively.
Thus all definite resonances except $1{:}1$ have in common that they
occur persistently in $1$--parameter families and if more parameters
are present these are moduli, see~\cite{duistermaat1984}.
In this respect our case of the $1{:}1$~resonance is very exceptional:
its co-dimension is~$5$, it occurs persistently in $3$--parameter
families and two of the unfolding parameters are moduli.
When we restrict to the linear unfolding, there is a transformation
group acting on the unfolding.
This can be used to reduce the number of parameters.
Using invariants of this transformation group we find that one of the
generators is $\mu_1^2 + \mu_2^2 + \mu_3^2$.
Then in a \emph{reduced  linear unfolding} the $1{:}1$~resonance
occurs persistently in a $1$--parameter family, see~\cite{hk2010}
for more details.

Before applying singularity theory we reduce the $S^1$--symmetric
system using invariants.
Another approach is that in~\cite{bhlv2003} where the system is
first reduced to a planar system.
Then singularity theory using right equivalence is applied to obtain
an unfolding.
With a different notion of equivalence one may expect different
co-dimensions.
In~\cite{bhlv2003}, by nature of the method, one finds lower bounds
for the co-dimensions.
For the resonances $1{:}2$, $1{:}3$ and $1{:}4$ these lower bounds
are computed and they coincide with the co-dimensions found
in~\cite{duistermaat1984}, namely $1$, $3$ and~$2$, respectively.
However, the non-degeneracy conditions of \cite{bhlv2003}
and~\cite{duistermaat1984} differ.
It would be interesting to compare both methods for the
$1{:}1$~resonance.

The results obtained so far are a starting point for extensions and
applications.
Let us list a few.
In general, when a system passes a resonance upon varying one or more
parameters, one expects a bifurcation to occur.
We see this phenomenon in the resonances mentioned earlier.
Therefore we would like to explore the bifurcation scenario of the
$1{:}1$~resonance, or more general explore the geometry of level sets
of the momentum mapping depending on parameters near $1{:}1$~resonance.
A similar program can be carried out for Hamiltonian systems in
$1{:}1$~resonance which are also reversible, see~\cite{lisse1995},
or symmetric (other than the $S^1$ symmetry induced by the
$1{:}1$~resonance).
The unfolding of the semisimple $1{:}{-}1$~resonance is similar to the
unfolding of the $1{:}{-}1$~resonance, but the bifurcation scenario is
most likely very different.
A well-known system in $1{:}1$~resonance is the H\'enon--Heiles system.
Our original plan, to apply the unfolding and bifurcation results, now
comes within reach.
Furthermore we wish to relate our results to the results in a series of
articles by Elipe, Lanchares et al.\ and Frauendiener
\cite{elis1995, frauendiener1994,
 lanchares1993, le1994, le1995a, le1995b, lisse1995}
for families of $S^1$--symmetric Hamiltonian systems.
These are the subjects of future publications.

\section*{Acknowledgment}
It is a pleasure to thank Henk Broer, Richard Cushman, Jaap Top and
Gert Vegter for fruitful discussions and suggestions.

%
\bibliographystyle{plain}

\end{document}